\numberwithin{equation}{section}
\numberwithin{figure}{section}
\newtheorem{theorem}{Theorem}[section]
\newtheorem{lemma}[theorem]{Lemma}
\newtheorem{remark}[theorem]{Remark}
\begin{document}

\title[Stochastic Cahn-Hilliard/Allen-Cahn equation]{Existence and regularity of solution
for a\\  Stochastic Cahn-Hilliard/Allen-Cahn equation\\ with
unbounded noise diffusion}
\author[D. Antonopoulou]{Dimitra C.~Antonopoulou$^{\dag}$}
\thanks{$^{\dag}$ Department of Applied Mathematics, University of
Crete, and IACM, FORTH, Heraklion Greece}
\address{D. Antonopoulou, Department of Applied Mathematics, University of Crete,
GR--714 09 Heraklion, Greece, and, Institute of Applied and
Computational Mathematics, FO.R.T.H., GR--711 10 Heraklion,
Greece.}
\email{danton@tem.uoc.gr}
\author[G. Karali]{Georgia Karali$^{\dag}$}
\address{G. Karali, Department of Applied Mathematics, University of Crete,
GR--714 09 Heraklion, Greece, and, Institute of Applied and
Computational Mathematics, FO.R.T.H., GR--711 10 Heraklion,
Greece.}
\email{gkarali@tem.uoc.gr}
 \author[A. Millet]{Annie Millet$^{\ddag *}$}
 \thanks{$^{\ddag}$ SAMM (EA 4543), Universit\'e Paris 1 Panth\'eon Sorbonne {\it and}  $^{*}$ LPMA
(CNRS UMR 7599) }
\address
{A. Millet, SAMM (EA 4543), Universit\'e Paris 1 Panth\'eon Sorbonne,
 90 Rue de Tolbiac, 75634 Paris Cedex 13, France {\it  and }
Laboratoire de Probabilit\'es et Mod\`eles Al\'eatoires
(CNRS UMR 7599), Universit\'es Paris~6-Paris~7, Bo\^{\i}te Courrier 188,
4 place Jussieu, 75252 Paris Cedex 05,  France}
\email {annie.millet@univ-paris1.fr {\it and} annie.millet@upmc.fr}


\subjclass{35K55, 35K40, 60H30, 60H15.}

\begin{abstract}
The Cahn-Hilliard/Allen-Cahn equation with noise is a simplified
mean field model of stochastic microscopic dynamics associated
with adsorption and desorption-spin flip mechanisms in the
context of surface processes. For such an equation we consider a
multiplicative space-time white noise with diffusion coefficient
of sub-linear growth. Using technics from semigroup theory, we
prove existence, and path regularity of stochastic solution
depending on that of the initial condition. Our results are also
valid for the stochastic Cahn-Hilliard equation with unbounded
noise diffusion, for which previous results were established only
in the framework of a bounded diffusion
coefficient. We prove that the path regularity of stochastic
solution depends on that of the initial condition, and are identical to those proved for the stochastic
Cahn-Hilliard equation and a bounded noise diffusion coefficient. 
 If the initial condition vanishes, they are strictly less than $2-\frac{d}{2}$ in space and $\frac{1}{2}-\frac{d}{8}$ in time. 
As expected from the theory of parabolic operators in the sense
of Petrovsk\u{\i\i}, the bi-Laplacian operator seems to be
dominant in the combined model.
\end{abstract}

\maketitle \textbf{Keywords:} {\small{Stochastic
Cahn-Hilliard/Allen-Cahn equation, space-time white noise,
convolution semigroup, Galerkin approximations, unbounded
diffusion.}}

\section{Introduction}
\subsection{The Stochastic equation}
We consider the Cahn-Hilliard/Allen-Cahn equation with
multiplicative space-time noise:

\begin{equation} \left\{ \begin{array}{rll} \label{S-CH-AC}
u_t & =&-\varrho \Delta\Big(\Delta u-f(u)\Big)+\Big(\Delta
u-f(u)\Big)
+\sigma(u)\dot{W} \quad {\rm in}\quad {\mathcal{D}}\times [0,T),\\
u(x,0)& =&u_0(x)\quad {\rm in}\quad{\mathcal{D}},  \\ 
\frac{\partial u}{\partial \nu}&=&\frac{\partial \Delta u}{\partial
\nu}\; =\; 0 \quad {\rm on}\quad \partial{\mathcal{D}}\times [0,T).
\end{array} \right. 
\end{equation}
Here, ${\mathcal{D}}$ is a rectangular domain in $\mathbb{R}^d$
with $d=1,2,3$, $\varrho>0$ is a ``physical diffusion"   constant,
$f$ is a polynomial of degree 3 with a positive leading
coefficient, such as $f=F'$ where
 $F(u)=(1-u^2)^2$ is a double equal-well potential.
The ``noise diffusion"   coefficient $\sigma(\cdot)$ is a Lipschitz function
with sub-linear growth, $\dot W$ is a space-time
white noise in the sense of Walsh \cite{W}, and $\nu$ is the
outward normal vector. In addition, we assume that the initial
condition $u_0$ is sufficiently integrable or regular, depending on the desired results on the solution. Obviously,
when $\sigma:=1$, the noise in \eqref{S-CH-AC} becomes additive.

In this paper, as in \cite{CW}, we will analyze the more general
case of multiplicative noise. Thus, in the sequel we  will give
 sufficient conditions on the initial condition $u_0$ and on the coefficient
 $\sigma$ so that a unique
 global solution exists with Lipschitz path-regularity.
However, unlike \cite{CW}, we consider a more general
 Lipschitz coefficient $\sigma$ with sub-linear growth such that
\begin{equation}\label{undif}
|\sigma(u)|\leq C (1+|u|^\alpha),
\end{equation}
 for some $\alpha \in (0,
\frac{1}{9})$ and $C$ a positive constant.

The stochastic Cahn-Hilliard equation can be
considered as a special case of our model. Therefore, our method
shows that all the results of \cite{CW} on well-posedeness and
path-regularity for the solution of the stochastic Cahn-Hilliard
equation with a multiplicative noise extend to the case where the
function $\sigma$ satisfies the aforementioned sub-linear growth
assumption. Furthermore, using the factorization method for the
stochastic term we derive a better path regularity than that
obtained in \cite{CW}.

\subsection{The physical background}
Surface diffusion and adsorption/desorption consist the
micromechanisms that are typically involved in surface processes
or on cluster interface morphology. Chemical vapor deposition,
catalysis, and epitaxial growth are surface processes involving
transport and chemistry of precursors in a gas phase where the
unconsumed reactants and radicals adsorb onto the surface of a
substrate so that surface diffusion, or reaction and desorption
back to the gas phase is observed. Such processes have been
modelled by continuum-type reaction diffusion models where
interactions between particles are neglected or treated
phenomenologically, \cite{IE, E}. Alternatively, a more precise
microscocpic description is provided in statistical mechanics
theories, \cite{GB}. For instance we can consider a combination
of Arrhenius adsorption/desorption dynamics, Metropolis surface
diffusion and simple unimolecular reaction; the corresponding
mesoscopic  equation is:
\begin{equation}
u_t- D\nabla \cdot \Big[ \nabla u-\beta u(1-u)\nabla J*u\Big]-
\Big[ k_ap(1-u)-k_du\exp\big(-\beta J*u\big)\Big]+k_ru =0\, .
\label{mf3}
\end{equation}
Here,  $D$ is the diffusion constant, $k_r$, $k_d$ and $k_a$
denote respectively the reaction, desorption and adsorption
constants while $p$ is the  partial  pressure of the gaseous
species. The partial pressure $p$ is assumed to be a constant,
although realistically it is given by the fluids equations in the
gas phase. Furthermore, $J$ is the particle-particle interaction
energy and $\beta$ is the inverse temperature.

Stochastic microscopic dynamics such as Glauber and Metropolis
dynamics have been analyzed for adsorption/desorption-spin flip
mechanisms in the context of surface processes; for more details
we refer to the review article \cite{KV03}.
In addition, the Kawasaki and Metropolis stochastic dynamics
models describe the diffusion of a particle on a surface, where
sites cannot be occupied by more than one particle. Stochastic
time-dependent Ginzburg-Landau type equations with additive
Gaussian white noise source such as Cahn-Hilliard and Allen-Cahn
appear as Model B and Model A respectively in the classical
theory of phase transitions according to the universality
classification of Hohenberg and Halperin \cite{hohen}. A
simplified mean field mathematical model, associated with the
aforementioned mechanisms that describes surface diffusion,
particle-particle interactions and as well as adsorption to and
desorption from the surface, is a partial differential equation
written as a combination of Cahn-Hilliard and Allen-Cahn type
equations with noise. The Cahn-Hilliard operator is related to
mass conservative phase separation and surface diffusion in the
presence of interacting particles. On the other hand, the
Allen-Cahn operator is related to adsorption and desorption and
serves as a diffuse interface model for antiphase grain boundary
coarsening.

At large space-time scales the random fluctuations are suppressed
and a deterministic pattern emerges. Such a deterministic model
has been analyzed by Katsoulakis and Karali in \cite{KK}. The so
called mean field partial differential equation has the following
form:
\begin{equation}\label{uinitial}
\left\{ \begin{array}{rll} 
u_t&=&-\varepsilon^2\varrho\Delta\left(\Delta
u-\displaystyle\frac{f(u)}{\varepsilon^2}\right)
+\Delta u-\displaystyle\frac{f(u)}{\varepsilon^2},\\
u(x,0)&=&u_0(x),
\end{array} \right.
\end{equation}
where $f(u)=F^{\prime}$ for $F=(1-u^2)^2/4$ a double-well
potential with wells $\pm 1$, $\varrho>0$ is the diffusion
constant and $0<\varepsilon\ll1$ is a small parameter. In
\cite{KK}, the authors rigorously derived the macroscopic cluster
evolution laws and transport structure as a motion by mean
curvature depending on surface tension to observe that due to
multiple mechanisms an effective mobility speeds up the cluster
evolution.

\begin{remark}
The stochastic equation analyzed in this work is a simplified mean
field model for interacting particle systems used in statistical
mechanics. These systems are Markov processes set on a lattice
corresponding to a solid surface.  A  typical example is the
Ising-type systems defined on a multi-dimensional lattice; see 
\cite{GLP}. Assuming that the particle-particle interactions are
attractive, then the resulting system's Hamiltonian is
nonnegative (attractive potential). Hence, the diffusion constant
$\varrho$ of the SPDE \eqref{S-CH-AC} is considered positive, as in 
\cite{KK}.
\end{remark}
\begin{remark}
Ginzburg-Landau type operators are usually supplemented by
Neumann or periodic boundary conditions. In order to obtain an
initial and boundary value problem we consider the SPDE
\eqref{S-CH-AC} with the standard homogeneous Neumann boundary
conditions on $u$ and $\Delta u$. These conditions are frequently
used for the deterministic or stochastic Cahn-Hilliard equation; see e.g. 
\cite{El-Z,DaDe,CW}.
\end{remark}

\subsection{Main results}
As a first step for a rigorous mathematical analysis of the
stochastic model, in Section \ref{sec2}, we will prove existence
and uniqueness of a solution to
\eqref{S-CH-AC}  
 when the initial condition
$u_0$ belongs to $L^q({\mathcal{D}})$ for $q\in[3,\infty)$ if
$d=1,2$ and $q\in [6,\infty)$ if $d=3$. Section \ref{gener}
describes some possible general assumptions on the domain
${\mathcal{D}}$ which would lead to the same result and presents
the stochastic Cahn-Hilliard equation as a special case of a
Cahn-Hilliard/Allen-Cahn stochastic model.   Note that the approach used in this paper to solve
this non linear SPDE with a polynomial growth is similar to that developed  by  J.B. Walsh \cite{W} and I. Gy\"ongy
\cite{Gy} for the stochastic heat equation and related SPDEs.  Note that unlike these references, 
the smoothing effect of the bi-Laplace operator enables us to deal with a stochastic
perturbation driven by a space-time white noise in dimension 1 up to 3. 

The existence-uniqueness proof is similar to that of Cardon-Weber
in \cite{CW}, and relies on upper estimates of the fundamental
solution, Galerkin approximations and the application of a
cut-off function. However, the fact that the diffusion
coefficient $\sigma$ is unbounded requires to multiply $\sigma$
by the cut-off function in order to estimate properly the
stochastic integral, and then to use \textit{a priori} estimates for
the remaining part.

With our method we prove existence of a global solution under the
requirement that $\sigma$ satisfies the following sub-linear
growth condition: $|\sigma(u)|\leq C(1+|u|^\alpha)$ for some
$\alpha \in (0,\frac{1}{9})$ and some positive constant $C$. 
Finally note that the upper estimates on the Green's function stated in section \ref{sec2} obviously show that
all the results in \cite{CW} can be extended to our framework if $\sigma$ is bounded. Therefore, 
one of the main contributions of this paper is to deal with some unbounded noise coefficient $\sigma$.
Note that we could not apply the technique introduced by S.~Cerrai
\cite{Cerrai} for the stochastic Allen-Cahn equation (see also
the work of
 M.~Kuntze and J. van Neerven, \cite{Kuntze-vanNeernen}, for a more general framework) and obtain global
solutions for more general diffusion coefficients. This is due to
the fact that in our model, in contrast to the Allen-Cahn
equation, the Laplace operator is applied to the nonlinearity.
However, we believe that global solutions could exist for 
unbounded noise diffusion coefficients satisfying a growth condition  more general  than
\eqref{undif}.

Path regularity of the solution is proved in section \ref{path}.
If the initial condition vanishes, the domain ${\mathcal{D}}$ can
again be quite general. Otherwise, we have to impose that
${\mathcal{D}}$ is a rectangle.

Our method shows that all the results of \cite{CW} on
well-posedeness and path regularity of the solution to the
stochastic Cahn-Hilliard equation extend to the case of an unbounded
noise diffusion.  
In addition, we prove path regularity of  the considered Cahn-Hilliard/Allen Cahn
SPDE. The method, based on the factorization method for the deterministic and random forcing
terms,  yields the same regularity as  that proven in \cite{CW}, where $\sigma$ is bounded.

As usual we demote by $C$ a generic constant and by $C(s)$ a constant depending 
on some parameter $s$. For $p\in [1,\infty]$, the $L^p({\mathcal{D}})$-norm is denoted by  $\|\cdot\|_{p}$.
Finally, given real numbers $a$ and $b$ we let $a\vee b$ (resp. $a\wedge b$) denote the maximum  (resp. the minimum) 
of $a$  and $b$.

\section{Existence of stochastic solution}\label{sec2}
\subsection{Preliminaries}
 For simplicity and to ease notation, without restriction of generality, we will assume that 
 the ``physical diffusion" constant 
$\varrho$ is equal to 1 and that ${\mathcal{D}}$ is the unitary
cub. Extension to more general domains will be addressed in the
next section.

In order to give a mathematical meaning to  the stochastic PDE
\eqref{S-CH-AC} we integrate in time and space and use the
initial and boundary conditions (see e.g. \cite{W}). For a strict
definition of solution, we say that $u$ is a weak (analytic)
solution of the equation \eqref{S-CH-AC} if it satisfies the
following weak formulation:
\begin{align}\label{S-CH-AC-W}
&\int_{\mathcal{D}}\Big(u(x,t)-u_0(x)\Big)\phi(x)\; dx=\nonumber\\
&\int_0^t \int_{\mathcal{D}}\Big(-
\Delta^2 \phi(x)u(x,s)
+\Delta\phi(x)[
f(u(x,s))+u(x,s)]
-\phi(x) f(u(x,s)\Big)\; dxds \nonumber \\
&+\int_0^t\int_{\mathcal{D}}\phi(x)\sigma(u(x,s))\; W(dx, ds),
\end{align}
for all $\phi\in {\mathcal C}^4({\mathcal{D}})$ with
$\frac{\partial\phi}{\partial
\nu}=\frac{\partial\Delta\phi}{\partial \nu}=0$ on $\partial
{\mathcal{D}}.$ Note that this $u$ stands as a probabilistic
"strong solution" since we keep the given space-time white noise
and do not only deal with the distribution of the processes.

The random measure $W(dx,ds)$ is the $d$-dimensional space-time
white noise, that is induced by the one-dimensional
$(d+1)$-parameter (with $d$  space variables and one time
variable) Wiener process $W$ defined as $W:=\big\{ W(x,t):\;
t\in[0,T],\; x\in{\mathcal{D}}\big\}$. For every $t\geq 0$ we
let  ${\mathcal F}_t := \sigma \big( W(x,s):\; s\leq t,\;
x\in {\mathcal{D}}\big)$ denote  the filtration generated by $W$, cf.
\cite{W,CW,AK}. Furthermore, we assume that the coefficient
$\sigma : {\mathbb R} \to {\mathbb R}$ is a Lipschitz function and
satisfies the following growth condition for some $\alpha \in (0,
1]$ and $C>0$:
\begin{equation*}
|\sigma(x)|\leq C (1+|x|^\alpha), \; \forall x\in {\mathbb R}.
\end{equation*}

\subsection{Some results on the Green's function}\label{secGreen}
Let $\Delta$ denote the Laplace operator; we shall use the
Green's function for the operator ${\mathcal
T}:=-\Delta^2+\Delta$ on ${\mathcal{D}}$ with the homogeneous
Neumann conditions, that is the fundamental solution to
$\partial_t u - {\mathcal T}u =0$ on ${\mathcal{D}}$ with the
boundary conditions 
$ \frac{\partial u}{\partial \nu}=\frac{\partial \Delta u}{\partial
\nu}=0$  on $ \partial{\mathcal{D}}\times [0,T)$.
Let $k=(k_i,\;i=1,\cdots, d)$ denote a multi-index with
non-negative integer components $k_i$ and let
${ \|k\|^2 }:=\displaystyle{\sum_i} k_i^2$. We set
$\epsilon_0(x):=\frac{1}{\sqrt{\pi}}$, and for any positive
integer $j$ we define $\epsilon_j(x):=\sqrt{\frac{2}{\pi}}
\cos(jx)$. Finally for $k=(k_i) \in {\mathbb N}^d$ and $x\in
{\mathcal{D}}$ let $\epsilon_k(x):= \displaystyle{\prod_i}
\epsilon_i(x_i)$. Then $(\epsilon_k,\;k\in {\mathbb N}^d)$ is an
 orthonormal basis of
$L^2({\mathcal{D}})$ consisting on eigenfunctions of $ {\mathcal
T}$ corresponding to  the  eigenvalues $-\lambda_k^2 - \lambda_k$ where
{ $\lambda_k=\|k\|^2$. } Of course, $\epsilon_0$ is related to the
null eigenvalue.

Let $S(t):=e^{(-\Delta^2+\Delta)t}$ be the semi-group generated
by the operator ${\mathcal T}$;   if $u:=\sum_{k}(u,\epsilon_k) \,
\epsilon_k$ then
$${\mathcal T} u=\sum_{k} -(\lambda_k^2 + \lambda_k) (u,\epsilon_k)_{L^2(\mathcal{D})} \, \epsilon_k,$$
and (see e.g.  \cite{DaDe,CW}) the
convolution semigroup is defined by
$$S(t)U(x):= \sum_{k} e^{-(\lambda_k^2+\lambda_k)
t}(U,\epsilon_k)_{L^2(\mathcal{D})}\epsilon_k(x),$$ for any $U$ in
$L^2({\mathcal{D}})$ with the associated Green's function given by
\begin{equation}\label{Green}
G(x,y,t)=\sum_{k}
e^{-(\lambda_k^2 + \lambda_k) t}\, \epsilon_k(x)\, \epsilon_k(y)
\end{equation}
for $t>0$, $x,y\in {\mathcal D}$.
Using the Definition 1.3 of \cite{EZ}, we deduce that ${\mathcal
T}=-\Delta^2+\Delta$ is uniformly strongly parabolic in the sense
of Petrovsk\u{\i\i}. Thus, as proved in \cite{EI}, the following
upper estimates of the Green function $G$ and its various
derivatives hold true. Notice that they are similar to those of
the Green's function used in \cite{CW} for the operator
$-\Delta^2$.
\begin{lemma}\label{G-est-lemma}
Let $G$ be the Green's function defined by \eqref{Green}. Then
there exist positive constants $c_1$ and $c_2$ such that for any
$t\in(0,T]$,  any $x,y\in{\mathcal{D}}$ and any multiindex $k=
(k_i,\; i=1,\cdots,d)$ with
 $|k|=\sum_{i=1}^d k_i \in \{1,2\}$, the next inequalities are satisfied:
\begin{eqnarray}\label{G-est-1}
|G(x,y,t)|&\leq&
c_1\, t^{-\frac{d}{4}}\exp\Big(-c_2\, {|x-y|^{\frac{4}{3}}}\, {t^{-\frac13}}\Big),\\
\label{G-est-2}
|\partial^k_x G(x,y,t-s)|&\leq&
c_1\, t^{-\frac{d+|k|}{4}}\exp\Big(-c_2\, {|x-y|^{\frac{4}{3}}}\, t^{-\frac13}\Big),\\
\label{G-est-3}
|\partial_t G(x,y,t-s)|&\leq&
c_1\, t^{-\frac{d+4}{4}}\exp\Big(-c_2\, {|x-y|^{\frac{4}{3}}}\, {t^{-\frac13}}\Big).
\end{eqnarray}
\end{lemma}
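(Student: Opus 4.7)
The plan is to exploit the commutation of $\Delta$ and $-\Delta^2$, which factorizes the semigroup as $S(t)=e^{-\Delta^2 t}\circ e^{\Delta t}$, and to reduce the three estimates \eqref{G-est-1}--\eqref{G-est-3} to the analogous Gaussian-type bounds for the Green's function of the pure bi-Laplacian on ${\mathcal D}$ with the Neumann conditions, which are established in \cite{CW}. Since $\Delta$ and $\Delta^2$ share the orthonormal eigenbasis $(\epsilon_k)$, this factorization reads termwise as $e^{-(\lambda_k^2+\lambda_k)t}=e^{-\lambda_k^2 t}e^{-\lambda_k t}$, and translates by Parseval into
\[
G(x,y,t)=\int_{\mathcal D} G_{-\Delta^2}(x,z,t)\, H(z,y,t)\,dz,
\]
where $H$ is the Neumann heat kernel on ${\mathcal D}$ and $G_{-\Delta^2}$ is the Green's function of $-\Delta^2$ on ${\mathcal D}$.

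For the bi-Laplace kernel I would first recall the $\mathbb R^d$ estimate: the symbol $e^{-|\xi|^4 t}$ combined with scaling and Fourier inversion yields the bound $c_1 t^{-d/4}\exp(-c_2|x-y|^{4/3}t^{-1/3})$, with the $4/3$ exponent arising from the Fenchel--Legendre conjugate of $|\xi|^4$. The passage to the cube with the two Neumann conditions is then handled by the method of images: even reflection across each face of ${\mathcal D}$ commutes with both $\Delta$ and $\Delta^2$, so it preserves the compatibility condition $\partial_\nu u=\partial_\nu\Delta u=0$, and the cube kernel becomes a sum of the whole-space kernel over the reflection lattice, dominated on $(0,T]$ by its leading term. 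Combining this bound on $G_{-\Delta^2}$ with $H\geq 0$, $\int_{\mathcal D} H(z,y,t)\,dz\leq 1$, and the elementary sub-additivity $|x-y|^{4/3}\leq 2^{1/3}(|x-z|^{4/3}+|z-y|^{4/3})$ to separate the exponent, the convolution representation preserves a bound of the form \eqref{G-est-1} with the constants $c_1$, $c_2$ suitably adjusted.

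For the spatial derivative bound \eqref{G-est-2} I would differentiate termwise in \eqref{Green} and absorb the factor $\lambda_k^{|k|/2}$ produced by $\partial^k_x\epsilon_k$ against half of the exponential using the elementary inequality $\lambda_k^{|k|/2}e^{-\lambda_k^2 t/2}\leq C_{|k|}t^{-|k|/4}$; the remaining series then reproduces \eqref{G-est-1} via the same convolution-with-$H$ step. For the time derivative estimate \eqref{G-est-3}, I would use $\partial_t G=\mathcal T G=-\Delta^2 G+\Delta G$ together with \eqref{G-est-2} applied with $|k|=4$ and $|k|=2$ respectively, both contributions being dominated by $t^{-(d+4)/4}\exp(-c_2|x-y|^{4/3}t^{-1/3})$ on $(0,T]$.

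The hard part is ensuring that the reflection-periodization argument is compatible simultaneously with both Neumann conditions and with the lower-order perturbation $\Delta$. This is precisely where the Petrovsk\u{\i\i} strong parabolicity of $\mathcal T$, verified from its full symbol $-|\xi|^4-|\xi|^2$ whose leading part $-|\xi|^4$ satisfies the ellipticity required in Definition~1.3 of \cite{EZ}, is essential: the general parabolic regularity theorem of \cite{EI} for this class of higher-order uniformly strongly parabolic operators with compatible boundary conditions produces the three stated Gaussian-type upper bounds at once, absorbing the delicate interplay between the boundary reflections and the two differential operators $\Delta$ and $\Delta^2$ into a single \emph{a priori} estimate rather than requiring the steps above to be carried out in detail.
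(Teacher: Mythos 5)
Your final paragraph is, in fact, essentially the entirety of the paper's own argument: the authors verify that ${\mathcal T}=-\Delta^2+\Delta$ is uniformly strongly parabolic in the sense of Petrovsk\u{\i\i} via Definition 1.3 of \cite{EZ} and then invoke the Green-matrix estimates of Eidelman--Ivasisen \cite{EI} for such parabolic boundary value problems; no further proof is given in the paper. So your fallback coincides with the paper's route, and to that extent the proposal is acceptable.

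The constructive sketch you place in front of it, however, has two genuine problems if it is meant to stand on its own. First, in the composition step $G(x,y,t)=\int_{\mathcal D}G_{-\Delta^2}(x,z,t)H(z,y,t)\,dz$, the properties $H\geq 0$ and $\int_{\mathcal D}H\,dz\leq 1$ are not sufficient: after applying the sub-additivity in the form $|x-z|^{4/3}\geq 2^{-1/3}|x-y|^{4/3}-|z-y|^{4/3}$ you are left with the growing factor $\exp\big(c_2|z-y|^{4/3}t^{-1/3}\big)$ inside the integral, and this must be dominated by the Gaussian upper bound $H(z,y,t)\leq Ct^{-d/2}\exp(-c|z-y|^2/t)$ of the Neumann heat kernel (which beats it for $t\leq T$ by Young's inequality applied to the exponents); you need to invoke that bound explicitly. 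Second, and more seriously, your argument for \eqref{G-est-2} --- differentiate the eigenfunction series termwise and absorb the factor $\lambda_k^{|k|/2}$ via $\lambda_k^{|k|/2}e^{-\lambda_k^2t/2}\leq Ct^{-|k|/4}$ --- cannot produce the off-diagonal factor $\exp\big(-c_2|x-y|^{4/3}t^{-1/3}\big)$: once you bound the series by absolute values term by term, you only recover the on-diagonal rate $t^{-(d+|k|)/4}$, because the exponential decay in $|x-y|$ comes entirely from cancellations among the oscillating eigenfunctions. The derivative bounds must instead come from differentiating the reflected whole-space kernel in the method-of-images representation, or directly from the general theory of \cite{EI}; the termwise series manipulation is not a proof. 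Finally, the reduction of \eqref{G-est-3} to spatial derivative bounds via $\partial_tG={\mathcal T}G$ is legitimate, but note that it requires \eqref{G-est-2} for $|k|=4$, which lies outside the range $|k|\in\{1,2\}$ stated in the lemma and would need to be established as well.
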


Furthermore, given any $c>0$ there exists a positive constant
$C(c)$ such that
\begin{equation} \label{expexp}
\int_{{\mathbb R}^d} \exp\big(-c|x|^{\frac{4}{3}} t^{-\frac{1}{3}}\big) dx  = C(c) t^{\frac{d}{4}}.
\end{equation}
For $x\in {\mathcal D}$, $t > s\geq 0$, set 
\begin{equation} \label{defh}
h(x,t,s)=-c_2\,  |x|^{\frac{4}{3}}\, (t-s)^{-\frac{1}{3}}.
\end{equation} 
 The following lemma gathers several estimates for integrals of space (respectively time) increments
of $G$. Note once more that the results are the same as those of Lemma
1.8 in \cite{CW} and are deduced from the explicit formulation \eqref{Green} of $G$ by using similar
arguments.
\begin{lemma}\label{G-holder}
Let $G$ be the Green's function defined by \eqref{Green}. Given
positive constants $\gamma, \gamma '$ with $\gamma<(4-d)$,
$\gamma\leq 2$ and  $\gamma '<1-\frac{d}{4}$, there exists a
constant $C>0$
  such that for any $t>s\geq 0$ and any $x,y \in {\mathcal{D}}$ the
  next estimates hold true:
\begin{eqnarray}\label{G-est-4}
\int_0^t\int_{\mathcal{D}}|G(x,z,t-r)-G(y,z,t-r)|^2\;dzdr
&\leq& C|x-y|^\gamma,\\
\label{G-est-5}
\int_0^s\int_{\mathcal{D}}|G(x,z,t-r)-G(x,z,s-r)|^2\;dzdr& \leq
&  C|t-s|^{\gamma '},\\
\label{G-est-6} \int_s^t\int_{{\mathcal{D}}}|G(x,z,t-r)|^2\;dzdr
&\leq& C |t-s|^{\gamma '}.
\end{eqnarray}
\end{lemma}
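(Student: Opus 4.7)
The overall plan is to derive all three estimates directly from the pointwise Gaussian-type bounds on $G$ and its derivatives given in Lemma \ref{G-est-lemma}, combined with the normalization formula \eqref{expexp} for Gaussian-like densities. The argument is a straightforward transposition of the one used in \cite{CW} for the operator $-\Delta^2$: since the bounds in \eqref{G-est-1}--\eqref{G-est-3} have exactly the same structure as those used there, the same interpolation arguments go through verbatim. I would handle the three inequalities in order of increasing difficulty: \eqref{G-est-6}, then \eqref{G-est-4}, then \eqref{G-est-5}.

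For \eqref{G-est-6} the computation is direct. Squaring \eqref{G-est-1} gives $|G(x,z,t-r)|^2 \leq c_1^2\, (t-r)^{-d/2} \exp\bigl(-2c_2 |x-z|^{4/3}(t-r)^{-1/3}\bigr)$. Integrating $z$ over $\mathcal{D}\subset\mathbb{R}^d$ and applying \eqref{expexp} with constant $2c_2$ yields $\int_{\mathcal{D}}|G(x,z,t-r)|^2 dz \leq C\, (t-r)^{-d/4}$. Since $d\leq 3$, this singularity is integrable, and $\int_s^t (t-r)^{-d/4}dr = C\, (t-s)^{1-d/4}$, which gives the claim for any $\gamma'\leq 1-d/4$.

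For \eqref{G-est-4} the plan is to interpolate between two bounds on $|G(x,z,\tau)-G(y,z,\tau)|$: the trivial triangle-inequality bound $A:=|G(x,z,\tau)|+|G(y,z,\tau)|$, controlled by \eqref{G-est-1}, and the mean-value-theorem bound $B:=|x-y|\sup_{\xi\in[x,y]}|\nabla_x G(\xi,z,\tau)|$, controlled by \eqref{G-est-2} with $|k|=1$. Writing $|G(x,z,\tau)-G(y,z,\tau)|^2\leq A^{2-\gamma}B^{\gamma}$ for any $\gamma\in[0,2]$ produces, after collecting the Gaussian factors (with a reduced constant $c'<2c_2$, and treating the region $\{\xi:|\xi-z|\geq \tfrac{1}{2}|x-z|\}$ versus its complement to absorb the $\sup_\xi$), a bound of the form $C\, |x-y|^{\gamma}\,\tau^{-(d+\gamma)/4}\exp\bigl(-c'|x-z|^{4/3}\tau^{-1/3}\bigr)$. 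Integrating in $z$ via \eqref{expexp} gives $C\,|x-y|^{\gamma}\,(t-r)^{-\gamma/4}$, and integrating in $r\in[0,t]$ is finite precisely when $\gamma/4<1$, i.e.\ $\gamma<4-d$ (the constraint $\gamma\leq 2$ being the validity range of the interpolation).

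For \eqref{G-est-5} the analogous strategy uses the fundamental theorem of calculus: $G(x,z,t-r)-G(x,z,s-r)=\int_{s-r}^{t-r}\partial_\tau G(x,z,\tau)\,d\tau$. Estimate \eqref{G-est-3} together with the monotonicity $\tau^{-(d+4)/4}\leq(s-r)^{-(d+4)/4}$ on $\tau\in[s-r,t-r]$ gives $B\lesssim (t-s)\,(s-r)^{-(d+4)/4}\exp\bigl(-c_2|x-z|^{4/3}(t-r)^{-1/3}\bigr)$, while the trivial bound reads $A\lesssim (s-r)^{-d/4}\exp\bigl(-c_2|x-z|^{4/3}(s-r)^{-1/3}\bigr)$. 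Interpolating $|G-G|^2\leq A^{2-\theta}B^{\theta}$ with $\theta\in[0,2]$, integrating $z$ via \eqref{expexp} and then $r\in[0,s]$ converges iff $\theta<1-d/4$, yielding \eqref{G-est-5} with $\gamma'=\theta$ arbitrarily close to $1-d/4$.

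The main technical obstacle will be the bookkeeping in paragraphs three and four: one must keep the exponential factor with a strictly positive constant after the interpolation and after passing the $\sup_\xi$ through (for the space increment) so that \eqref{expexp} can be applied; splitting the $z$-integral into regions where the variable point $\xi$ can be replaced by one of the endpoints is the standard way to do this. Once this is done, the three estimates follow by straightforward power counting of the resulting singularities in $t-r$.
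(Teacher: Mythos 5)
Your strategy is sound and the conclusions are right, but it takes a genuinely different route from the paper. The paper does not prove Lemma \ref{G-holder} at all: it refers to Lemma 1.8 of \cite{CW} and states that the estimates ``are deduced from the explicit formulation \eqref{Green} of $G$'', i.e.\ the intended argument is spectral: by Parseval, $\int_{\mathcal D}|G(x,z,\tau)-G(y,z,\tau)|^2dz=\sum_k e^{-2(\lambda_k^2+\lambda_k)\tau}|\epsilon_k(x)-\epsilon_k(y)|^2$, one bounds $|\epsilon_k(x)-\epsilon_k(y)|^2\le C(\|k\|^2|x-y|^2\wedge 1)\le C\|k\|^{\gamma}|x-y|^{\gamma}$ for $\gamma\le 2$, integrates in time to get $\sum_k\|k\|^{\gamma-4}$, which converges iff $\gamma<4-d$; the time increments are handled analogously with $|e^{-\mu_k(t-r)}-e^{-\mu_k(s-r)}|$. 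Your proof instead uses only the pointwise kernel bounds of Lemma \ref{G-est-lemma} together with interpolation $|{\cdot}|^2\le A^{2-\gamma}B^{\gamma}$; this is self-contained, does not rely on the rectangular geometry or on knowing the eigenfunctions, and is the approach used elsewhere in the paper (e.g.\ for \eqref{supx} and \eqref{supt}), so it is arguably more in the spirit of the generalization discussed in Section \ref{gener}. The spectral route is shorter when the eigenbasis is explicit.

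Three bookkeeping points in your sketch need repair, all fixable. First, in \eqref{G-est-4} the pointwise product is $A^{2-\gamma}B^{\gamma}\lesssim|x-y|^{\gamma}\tau^{-(2d+\gamma)/4}\exp(\cdots)$, which becomes $\tau^{-(d+\gamma)/4}$ only \emph{after} the $z$-integration; your stated criterion ``$\gamma/4<1$, i.e.\ $\gamma<4-d$'' is internally inconsistent, though the correct criterion $(d+\gamma)/4<1$ does give $\gamma<4-d$. Second, on the near region $\{|x-z|\lesssim|x-y|\}$ you cannot simply bound the sup of the exponential by $1$ and use the measure of the region: that leaves $\tau^{-(2d+\gamma)/4}$, not time-integrable for $d\ge2$. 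Keep instead the exponential carried by the factor $A^{2-\gamma}$ (at the fixed points $x$ or $y$), which suffices for $\gamma<2$. Third, in \eqref{G-est-5} the bound $A\lesssim(s-r)^{-d/4}\exp(-c_2|x-z|^{4/3}(s-r)^{-1/3})$ is false for the term $|G(x,z,t-r)|$, since the exponential is \emph{increasing} in the time variable; you must split $A$ into its two summands and track the two exponential scales separately (the $(t-r)$-scale term is handled using $(t-r)^{-d/4+d\theta/4}\le(s-r)^{-d/4+d\theta/4}$ for $\theta<1$), after which both contributions reduce to $(t-s)^{\theta}(s-r)^{-d/4-\theta}$ and the stated range $\theta<1-\frac d4$ follows.
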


%
\subsection{Integral representation}
Using the Green's function, we can present the solution of
equation \eqref{S-CH-AC-W} in an integral form for any $x\in
{\mathcal{D}}$ and $t\in[0,T]$, that is the following mild solution:
\begin{align}\label{intf}
 u(x,t)
=&\int_{{\mathcal{D}}}u_0(y)G(x,y,t)\;dy  \nonumber \\
&+\int_0^t \int_{{\mathcal{D}}}\big[ \Delta G(x,y,t-s) -
G(x,y,t-s)\big] f(u(y,s))\;dyds
\nonumber \\
& +\int_0^t\int_{{\mathcal{D}}}G(x,y,t-s)\sigma(u(y,s))\; W(dy, ds). 
\end{align}

Application of the inequality \eqref{expexp} and H\"older's
inequality lead to the following bound for the term involving
the initial condition. 
\begin{lemma}
Let $G(x,y, t)$ be the  Green's function defined by \eqref{Green}.
For every $1\leq q<\infty$ and $T>0$ there exists a constant
$C:=C(T,q) $ such that
\begin{equation}\label{evolution-initial-value}
\sup_{t\in[0,T]}\|G_t u_0\|_{q}\leq C \|u_0\|_q,
\end{equation}
where $G_0=Id$ and  $G_t u_0$ is defined for $t>0$ by
\begin{equation}\label{Gt}
G_t u_0(x):=\int_{\mathcal{D}}u_0(y)G(x,y,t)\; dy .
\end{equation}
\end{lemma}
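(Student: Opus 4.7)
The plan is to establish the standard Schur/Young--type criterion for boundedness on $L^q$: show that the kernel $G(x,y,t)$ satisfies the two symmetric $L^1$ bounds
$$ \sup_{x\in\mathcal{D}} \int_{\mathcal{D}} |G(x,y,t)|\, dy \leq C \quad \text{and} \quad \sup_{y\in\mathcal{D}} \int_{\mathcal{D}} |G(x,y,t)|\, dx \leq C $$
uniformly in $t\in(0,T]$, and then deduce the $L^q\to L^q$ estimate by a splitting argument. The case $t=0$ is trivial since $G_0=\mathrm{Id}$.

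First I would plug the Gaussian-type upper bound \eqref{G-est-1} into the first supremum. Enlarging the domain of integration from $\mathcal{D}$ to $\mathbb{R}^d$ and translating the center to the origin,
$$ \int_{\mathcal{D}} |G(x,y,t)|\, dy \leq c_1\, t^{-d/4} \int_{\mathbb{R}^d} \exp\bigl(-c_2 |y|^{4/3} t^{-1/3}\bigr)\, dy = c_1\, C(c_2), $$
by \eqref{expexp}; this bound is independent of $t\in(0,T]$ and $x\in\mathcal{D}$. Since the bound in \eqref{G-est-1} is symmetric in $x$ and $y$, the same argument yields the second $L^1$ bound.

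Given these two bounds, the case $q=1$ follows directly from Fubini's theorem:
$$ \|G_t u_0\|_1 \leq \int_{\mathcal{D}} |u_0(y)| \int_{\mathcal{D}} |G(x,y,t)|\, dx\, dy \leq C \|u_0\|_1. $$
For $1<q<\infty$ with conjugate exponent $q'$, split $|G|=|G|^{1/q'}|G|^{1/q}$ and apply H\"older's inequality:
$$ |G_t u_0(x)|^q \leq \left( \int_{\mathcal{D}} |G(x,y,t)|\, dy \right)^{q/q'} \int_{\mathcal{D}} |G(x,y,t)|\, |u_0(y)|^q\, dy. $$
Integrating in $x$, using Fubini, and invoking both uniform $L^1$ bounds on the kernel then gives $\|G_t u_0\|_q^q \leq C^{q/q'+1}\|u_0\|_q^q$, as claimed.

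There is no real obstacle beyond ensuring uniformity in $t$; the key observation is the exact cancellation of the $t^{-d/4}$ prefactor in \eqref{G-est-1} with the $t^{d/4}$ produced by integrating the stretched exponential via \eqref{expexp}. This cancellation is what keeps the operator norm bounded as $t\to 0^+$ and makes the Schur test applicable with constants that depend only on $T$ and $q$.
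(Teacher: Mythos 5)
Your proof is correct and follows essentially the same route the paper indicates (the paper only sketches it as "application of \eqref{expexp} and H\"older's inequality"): the cancellation of $t^{-d/4}$ against the $t^{d/4}$ from \eqref{expexp} gives the uniform Schur bounds, and the H\"older splitting $|G|=|G|^{1/q'}|G|^{1/q}$ is the standard way to conclude. No gaps.
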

\subsection{Truncated equation}\label{sectruncated}
In order to prove the existence of the solution $u$ to \eqref{intf},   as a first
step we consider an appropriated cut-off SPDE, cf. \cite{CW}. Let
$\chi_n\in C^1(\mathbb{R} , \mathbb{R}^+)$ be a cut-off function
satisfying $|\chi_n|\leq 1$,   $|\chi'_n|\leq 2$ for any $n>0$ and
$$  \chi_n(x) =\begin{cases}
    1 & \text{if} \;\;\; |x| \leq n, \\
    0 & \text{if} \;\;\; |x| \geq n+1.
  \end{cases}
$$

For fixed $n>0$, $x\in {\mathcal{D}}$,  $t\in[0,T]$ and $q\in [3,+\infty)$,  we consider
the following cut-off SPDE:
\begin{align}\label{u_n}
u_n(x,t)
=&\int_{{\mathcal{D}}}u_0(y)G(x,y,t)\;dy \nonumber \\
&+\int_0^t \int_{{\mathcal{D}}} \big[ \Delta G(x,y,t-s) - G(x,y,t-s)\big]\, \chi_n(\|u_n(\cdot,s)\|_q)\, f(u_n(y,s))\, dyds \nonumber \\
&+\int_0^t\int_{{\mathcal{D}}}G(x,y,t-s) \,
\chi_n(\|u_n(\cdot,s)\|_q)\,\sigma(u_n(y,s))\; W(dy, ds).
\end{align}

In this section we suppose that  $\sigma$ satisfies \eqref{undif} with $\alpha\in (0,1]$, 
and that the following condition {\bf (C$_\alpha$)} holds:
 \\
 {\bf Condition (C$_\alpha$)} {\it One of the following properties (i) or (ii) is satisfied: \\
 \indent (i) $d=1,2$ and $q\in [3, +\infty)$,  or $d=3$ and $q\in [6,+\infty)$,\\
\indent (ii)  $d=3$ and $q\in \big(3\vee [6(1-\alpha)],6\big)$. }
\\

 We show the existence and uniqueness of the solution to
the SPDE \eqref{u_n} in the set $\mathcal{H}_T$ defined by
\[   
\mathcal{H}_T:=\Big\{  u(\cdot, t)\in L^q({\mathcal{D}})\;
\mbox{\rm for } t\in [0,T]: \; u \; {\rm
is}\: (\mathcal{F}_t) \mbox{\rm -adapted}    
\; {\rm and }\;  \| u\|_{\mathcal{H}_T}<{\infty} \Big\},
\] 
where
\begin{equation}
\|u\|_{\mathcal{H}_T}:=\sup_{t\in[0,T]} E\Big{(}\|u(\cdot
,t)\|_q^{\beta}\Big{)}^{\frac{1}{\beta}},
\end{equation}
for  $\beta\in(\frac{q}{\alpha},\infty)$ if Condition {\bf
(C$_\alpha$)(i)} holds, or  for $\beta\in (\frac{q}{\alpha},
\frac{6q}{(6-q)})$ if Condition {\bf(C$_\alpha$)(ii)} holds.

\begin{remark}
(i) In order to present our results in a more general framework we
consider the growth condition \eqref{undif} with $\alpha\in
(0,1]$; the upper bound of $\alpha$ will be restricted in the
sequel.\\
(ii) Note that if $d=3$, the inequality $6(1-\alpha)<q<6 $ implies that the interval
$ (\frac{q}{\alpha}, \frac{6q}{(6-q)})$  is not empty. 
\end{remark}
\begin{theorem}
Let $\sigma$ be globally Lipschitz and satisfy the assumption
\eqref{undif} with $\alpha \in (0,1]$, let $u_0\in L^q({\mathcal
D})$ and let Condition {\bf (C$_\alpha$)} hold. Furthermore, let $\beta \in
(\frac{q}{\alpha},+\infty)$ if Condition {\bf (C$_\alpha$)(i)} is
satisfied (resp. $\beta \in (\frac{q}{\alpha},\frac{6q}{6-q})$ if
Condition {\bf (C$_\alpha$)(ii)} is satisfied). Then the SPDE \eqref{u_n}
admits a unique solution $u_n$ in every time interval $[0,T]$  
and $u_n\in {\mathcal H}_T$.
\end{theorem}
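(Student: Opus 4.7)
The plan is to construct $u_n$ by a Picard iteration and Banach fixed point argument in the space $\mathcal{H}_T$. Define the map $\Psi$ sending $v\in \mathcal{H}_T$ to the right-hand side of \eqref{u_n} with $u_n$ replaced by $v$ in the cut-off and in the arguments of $f$ and $\sigma$. The aim is to show that $\Psi$ is well-defined from $\mathcal{H}_T$ to itself and is a strict contraction on $\mathcal{H}_{T_0}$ for some sufficiently small $T_0>0$; iterating the construction on successive intervals of length $T_0$ then produces a unique solution on $[0,T]$.

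For the boundedness of $\Psi$, I would treat the three terms of \eqref{u_n} separately. The initial-condition term is controlled directly by \eqref{evolution-initial-value}, uniformly in $\omega$. For the deterministic integral, the cubic growth of $f$ gives $|f(u)|\le C(1+|u|^3)$, and on the support of $\chi_n(\|v(\cdot,s)\|_q)$ one has $\|v(\cdot,s)\|_q \le n+1$, hence $\|\chi_n(\|v(\cdot,s)\|_q)\,f(v(\cdot,s))\|_{q/3}\le C(1+(n+1)^3)$. Applying Minkowski's integral inequality together with the Green function bounds \eqref{G-est-1}--\eqref{G-est-2} and the Gaussian-type identity \eqref{expexp} converts the spatial $L^{q/3}$-norm of the nonlinearity into an $L^q$-norm of the convolution, at the cost of a time singularity $(t-s)^{-\eta}$ whose exponent is integrable precisely under the first half of Condition $(C_\alpha)$. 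The stochastic term is handled by a Burkholder--Davis--Gundy inequality with exponent $\beta$, combined with Minkowski to bring the spatial $L^q$-norm inside the expectation (this uses $\beta \geq q$, which follows from $\beta>q/\alpha$ and $\alpha\le 1$). The sub-linear growth \eqref{undif} combined with the cut-off gives $\|\chi_n(\|v(\cdot,s)\|_q)\,\sigma(v(\cdot,s))\|_{q/\alpha}\le C(1+(n+1)^\alpha)$, and a further H\"older step reduces the bound to a time integral of a negative power of $(t-s)$ coming from $\int_{\mathcal{D}} G(x,y,t-s)^2\,dy$; the finiteness of this integral over $[0,T]$ is guaranteed exactly by Condition $(C_\alpha)$, with part (ii) exploiting the extra room afforded by the exponent $\alpha$ when $d=3$ and $q<6$.

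For the contraction step, I would use the decomposition
\[
\chi_n(\|u\|_q)f(u)-\chi_n(\|v\|_q)f(v) = \chi_n(\|u\|_q)\big(f(u)-f(v)\big) + \big(\chi_n(\|u\|_q)-\chi_n(\|v\|_q)\big)f(v),
\]
and similarly for $\sigma$. Since $f$ is a cubic polynomial, $|f(u)-f(v)|\le C(1+|u|^2+|v|^2)|u-v|$ and the cut-off controls the quadratic factor in an $L^{q/2}$-sense by $C(1+(n+1)^2)$; the bound $|\chi'_n|\le 2$ yields $|\chi_n(\|u\|_q)-\chi_n(\|v\|_q)|\le 2\|u-v\|_q$, and the global Lipschitz property of $\sigma$ gives the analogous estimate for the noise coefficient. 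Repeating the boundedness estimates on the differences produces an inequality of the form $\|\Psi(u)-\Psi(v)\|_{\mathcal{H}_{T_0}}\le C(n)\,T_0^\eta\,\|u-v\|_{\mathcal{H}_{T_0}}$ for some $\eta>0$, which is a strict contraction for $T_0$ small enough, and uniqueness on $[0,T]$ follows by concatenation. The main difficulty is calibrating the H\"older exponents in the spatial and temporal integrals of powers of the Green function so that all of them remain finite; this is exactly the source of the ranges of $q$ and $\beta$ in Condition $(C_\alpha)$, and in the borderline case $d=3$, $q<6$, it is crucial that $\sigma$ has strictly sub-linear growth ($\alpha>0$), which is what makes the interval $(\tfrac{q}{\alpha},\tfrac{6q}{6-q})$ non-empty and the corresponding stochastic-term exponent integrable.
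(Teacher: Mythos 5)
Your overall strategy --- a Banach fixed point for the map $\Psi$ (the sum of the initial-condition term and the two integral operators) in $\mathcal{H}_{T_0}$ for $T_0$ small, with the self-mapping bounds obtained from Minkowski, Young, the Green-function estimates and the cut-off, followed by concatenation over intervals of length $T_0$ --- is exactly the paper's route, and your exponent bookkeeping for the boundedness step is essentially correct.

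There is, however, a genuine gap in your contraction step. With your decomposition
\[
\chi_n(\|u\|_q)f(u)-\chi_n(\|v\|_q)f(v)=\chi_n(\|u\|_q)\bigl(f(u)-f(v)\bigr)+\bigl(\chi_n(\|u\|_q)-\chi_n(\|v\|_q)\bigr)f(v),
\]
the only cut-off present in the first term is $\chi_n(\|u(\cdot,s)\|_q)$, which controls $\|u(\cdot,s)\|_q$ but says nothing about $\|v(\cdot,s)\|_q$; since $|f(u)-f(v)|\le C(1+|u|^2+|v|^2)|u-v|$, the resulting bound is $C\bigl(1+(n+1)^2+\|v(\cdot,s)\|_q^2\bigr)\|u-v\|_q$, not $Cn^2\|u-v\|_q$ as you assert. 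The second term is worse: when $\|v(\cdot,s)\|_q>n+1$ but $\|u(\cdot,s)\|_q\le n+1$, the factor $\chi_n(\|u\|_q)-\chi_n(\|v\|_q)$ is nonzero while $\|f(v(\cdot,s))\|_{q/3}\sim\|v(\cdot,s)\|_q^3$ is unbounded on $\mathcal{H}_T$, whose elements are only required to have finite moments, not uniformly bounded $L^q$-norms. So the claimed Lipschitz constant $C(n)\,T_0^{\eta}$ does not follow, and the same defect appears in your treatment of the noise coefficient, where $|\sigma(v)|\le C(1+|v|^{\alpha})$ is likewise uncontrolled. The fix is the one the paper uses: for each fixed $(s,\omega)$ relabel so that $\|u(\cdot,s)\|_q\le\|v(\cdot,s)\|_q$ and decompose instead as $\bigl[\chi_n(\|u\|_q)-\chi_n(\|v\|_q)\bigr]f(u)+\chi_n(\|v\|_q)\bigl[f(u)-f(v)\bigr]$; then the $\chi_n$-difference vanishes unless $\|u(\cdot,s)\|_q\le n+1$, and $\chi_n(\|v\|_q)\ne0$ forces both norms to be at most $n+1$, so every polynomial factor is bounded by a power of $n$. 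With that correction the remainder of your argument goes through as in the paper.
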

\begin{proof}
We define the operators $\mathcal{M}$ 
and $\mathcal{L}$ on $\mathcal{H}_T$ by
\begin{align}
\mathcal{M}(u)(x,t) : = &
\int_0^t \!\! \int_{{\mathcal{D}}}[ \Delta G(x,y,t-s) - G(x,y,t-s)] \chi_n(\|u(\cdot,s)\|_q) f(u(y,s)) \,dyds ,   \label{M}\\
\mathcal{L}(u)(x,t) :=&  \int_0^t \!\!
\int_{{\mathcal{D}}}G(x,y,t-s) \,
\chi_n(\|u(\cdot,s)\|_q)\,\sigma(u(y,s))\; W(dy, ds), \label{L}
\end{align}
with $u\in\mathcal{H}_T$. Then obviously \eqref{u_n} is written as
\begin{equation}\label{u_nn}
u_n(x,t)= \int_{{\mathcal{D}}}u_0(y)G(x,y,t)\;dy +{\mathcal
M}(u_n)(x,t)+{\mathcal L}(u_n)(x,t).
\end{equation}
We claim that if $T>0$ is sufficiently small, then the operator
$\mathcal{M}+\mathcal{L}$ is a contraction
mapping from $\mathcal{H}_T$ to $\mathcal{H}_T$.

First we consider the  mapping $\mathcal{M}$. For an arbitrary function
$u\in\mathcal{H}_T$, by Minkowski's inequality, \eqref{G-est-1}  and \eqref{G-est-2}
we have
\begin{align*}
\|\mathcal{M}(u)& (\cdot,t)\|_q\leq  c_1\int_0^t
(t-s)^{-\frac{d+2}{4}} \\
&\times \Big{\{} \int_{{\mathcal{D}}} \Big{|}\int_{{\mathcal{D}}}
\exp\Big{(}-c_2
\frac{|x-y|^\frac{4}{3}}{(t-s)^{\frac{1}{3}}}\Big{)}
\chi_n(\|u(\cdot,s)\|_q) f(u(y,s)) \; dy\Big{|}^q \;
dx\Big{\}}^{\frac{1}{q}}\; ds.
\end{align*}
By using Young's inequality with exponents $\rho$ and $r$ in
$[1,\infty)$ such that
$\frac{1}{\rho}+\frac{1}{r}=\frac{1}{q}+1$,   we obtain for $h(x,t,s):=- c_2
\frac{|x|^\frac{4}{3}}{(t-s)^{\frac{1}{3}}}$ defined by \eqref{defh} 
\begin{align} \label{m_1-1-5} 
\|\mathcal{M}(u)(\cdot,t)\|_q & \leq c_1 \int_0^t \!
(t-s)^{-\frac{d+2}{4}}\|\exp(h(\cdot,t,s))\|_{r}
\Big{\|}\chi_n(\|u(\cdot,s)\|_q)  f(u(.,s))\Big{\|}_{\rho} \; ds \nonumber \\
& \leq C \int_0^t
(t-s)^{-\frac{d+2}{4}+\frac{d}{4r}}
\Big{\|}\chi_n(\|u(\cdot,s)\|_q)f(u(.,s))\Big{\|}_{\rho} \; ds,
\end{align} 
where the last inequality follows from \eqref{expexp}.
We choose $\rho = \frac{q}{3}\geq 1$ since $q\in [3,\infty)$  and $r \in [1,\infty)$ satisfying
$\frac{1}{\rho}+\frac{1}{r}=\frac{1}{q}+1$ . %
The function  $f$ is a polynomial of degree 3, so, for  $n\geq 1$
we have
\begin{equation}
\Big{\|} \chi_n(\| u(\cdot, s) \|_{q}) f(u(.,s))
\Big{\|}_{\frac{q}{3}} \leq  C n^3.
\end{equation}
Since $q>d$ we deduce that  $-\frac{d+2}{4}+\frac{d}{4r}  > -1$; hence the above inequalities
yield
\begin{equation}\label{last-est-m_1}
\|\mathcal{M}(u)\|_{\mathcal{H}_T} =\sup_{t\in[0,T]}E\Big{(}\|
\mathcal{M}(u(\cdot,t)) \|_q^\beta\Big{)}^{\frac{1}{\beta}}\leq
C \; n^3\;  T^{-\frac{d}{4}+\frac{d}{4r}+\frac{1}{2}}.
\end{equation}
Therefore, $\mathcal{M}$ is a mapping from $\mathcal{H}_T$ to
$\mathcal{H}_T.$ Moreover,  for arbitrary $u$ and $v$ in
$\mathcal{H}_T$ such that $\|u(\cdot ,s)\|_q\leq\|v(\cdot,
s)\|_q$,  we shall prove that for  $q\in [3,\infty)$ and  $\rho = \frac{q}{3}$ the
next inequality holds true:
\begin{equation}\label{claim_m_1}
\Big{\|}\chi_n(\|u(\cdot,s)\|_q)f(u(\cdot,s)) -
\chi_n(\|v(\cdot,s)\|_q)f(v(\cdot,s)) \Big{\|}_{\rho} \leq C\, n^3 \|
u(\cdot,s)-v(\cdot,s) \|_{q}.
\end{equation}
Indeed, we have
\begin{align*}
\Big{\|}\chi_n(&\|u(\cdot,s)\|_q)    f(u(\cdot,s)) -
\chi_n(\|v(\cdot,s)\|_q)f(v(\cdot,s)) \Big{\|}_{\rho} \\
& \leq \Big{\|} \big[ \chi_n(\|u(\cdot,s)\|_q) -
\chi_n(\|v(\cdot,s)\|_q)\big] \, f(u(\cdot,s))
\Big{\|}_{\rho} 
+\Big{\|}\chi_n(\|v(\cdot,s)\|_q)\, \big[ f(u(\cdot,s)) -
f(v(\cdot,s)) \big]  \Big{\|}_{\rho}.
\end{align*}
Note that  $\|v(\cdot,s)\|_q\geq \|u(\cdot,s)\|_q$ and
$$\chi_{n}(\|u(\cdot,s)\|_{q})-\chi_n(\|v(\cdot,s)\|_q)=0\;\;\mbox{if}\;\;
\|u(\cdot,s)\|_q\geq n+1.$$ 
Hence, for $n\geq 1$ and
$\rho=\frac{q}{3}$ we obtain the existence of $C>0$ such that for all $n\geq 1$
\begin{align*}
\Big{\|} \big[ \chi_n(\|u(\cdot,s)\|_q) -
\chi_n(\|v(\cdot,s)\|_q)\big] \, f(u(\cdot,s))
\Big{\|}_{\rho}&\leq
 C \Big{(}1+(n+1)^3\Big{)}\Big{|}\|u(\cdot,s)\|_q-\|v(\cdot,s)\|_q\Big{|}\\
&\leq  C\; n^3\; \|u(\cdot,s)-v(\cdot,s)\|_q.
\end{align*}
Using again the inequality $\|v(\cdot,s)\|_q\geq
\|u(\cdot,s)\|_q$, then for any $n\geq 1$ we deduce the existence of $C>0$ such that 
\begin{align*}
\Big{\|}\chi_n(\|v(\cdot,s)\|_q) & \, \big[ f(u(\cdot,s)) -
f(v(\cdot,s)) \big]  \Big{\|}_{\rho} \\
&\leq C
\chi_n(\|v(\cdot,s)\|_q)\Big(1+\|v(\cdot,s)\|_q^2+\|u(\cdot,s)\|_q^2\Big)\|u(\cdot,s)-v(\cdot,s)\|_q\\
&\leq C n^2\, \|u(\cdot,s)-v(\cdot,s)\|_q.
\end{align*}
holds for any $n\geq 1$.  Thus, \eqref{claim_m_1} holds true.

Inequality \eqref{claim_m_1} and an argument similar to that used
for proving \eqref{m_1-1-5} yield
\begin{align}\label{M_1-1}
\|\mathcal{M}&(u) (\cdot,t)-\mathcal{M}(v)(\cdot,t)\|_q \nonumber \\
 &\leq  \int_0^t |t-s|^{-\frac{d+2}{4}+\frac{d}{4r}}\Big{\|}
  \chi_n(\|u(\cdot,s)\|_{q})f(u(\cdot,s))-\chi_n(\|v(\cdot,s)\|_{q})f(v(\cdot,s)) \Big{\|}_{\rho}\; ds\nonumber \\
 &\leq C\,  n^3 \int_0^t |t-s|^{-\frac{d+2}{4}+\frac{d}{4r}}  \| u(\cdot,s)-v(\cdot,s)\|_q\; ds.
\end{align}
Therefore, by inequality \eqref{M_1-1} and H\"older's inequality,
since $\beta \in [q,\infty)$, we deduce
\begin{align}
 \|\mathcal{M}(u)-\mathcal{M}(v)\|_{\mathcal{H}_T}
  &
 \leq C \,  n^{3} \sup_{t\in [0,T]}\Big\{  E \Big{|}
 \int_0^t |t-s|^{-\frac{d+2}{4}+\frac{d}{4r}}  \| u(\cdot,s)-v(\cdot,s)\|_q\; ds\Big{|}^{\beta} \Big\}^{1/\beta}
\nonumber \\
 &
 \leq
 C\, n^{3}\, T^{(-\frac{d+2}{4}+\frac{d}{4r}+1)}\sup_{t\in[0,T]}E\Big{(}\|u(\cdot,t)-v(\cdot,t)\|_{q}\Big{)} \nonumber \\
& \leq C \, n^3 \, T^{-\frac{d+2}{4}+\frac{d}{4r}+1}\|u-v\|_{\mathcal{H}_T}. \label{M_1-2}
\end{align}
Obviously, by \eqref{last-est-m_1} and \eqref{M_1-2} it follows
that for fixed $n\geq 1 $  and $T>0$, the map $\mathcal{M}$ is  Lipschitz   from $\mathcal{H}_T$ to
$\mathcal{H}_T$.

For the mapping ${\mathcal L}$ defined in terms of a stochastic integral,    at first notice
that since $\alpha \in (0,1]$, the inequality $\beta > \frac{q}{\alpha}$ yields $\beta \in (q,\infty)$.  Thus the H\"older, Burkholder and
Minkowski  inequalities, and the growth condition \eqref{undif} on $\sigma$ yield 
\begin{align*}
E\|{\mathcal L}(u(\cdot,t))\|_q^\beta & \leq C \int_{\mathcal{D}} E |{\mathcal L}(u(x,t)|^\beta \,dx \\
& \leq C \int_{\mathcal{D}} E \Big| \int_0^t \int_{\mathcal{D}}
\big| G(x,y,t-s) \, \chi_n(\|u(\cdot,s)\|_q)\,  \sigma(u(y,s))|^2
\,
dy ds \Big|^{\beta/2} dx \\
&\leq C  \Big( E \int_0^t \Big\| \int_{\mathcal{D}}
 G^2(\cdot,y,t-s)\,  \chi_n(\|u(\cdot,s)\|_q) \, \big[ 1+|u(y,s)|^{2\alpha}\big]\,  dy \Big\|_{\beta/2} ds \Big)^{\beta/2}.
\end{align*}
 Since $\beta \in (\frac{q}{\alpha},\infty)$, we have $\frac{2\alpha}{q} > \frac{2}{\beta}$ and  we may choose $\bar{r}\in (1,\infty)$
such that $\frac{2\alpha}{q}+\frac{1}{\bar{r}} =
\frac{2}{\beta}+1$. 
 Let once more $h(x,t,s)$ be defined by \eqref{defh}; 
Young's inequality and \eqref{G-est-1} imply
\begin{align*}
E\|{\mathcal L}(u(\cdot,t)\|_q^\beta \leq & C \Big( E\int_0^t
(t-s)^{-\frac{d}{2}} \Big\| \exp( h(\cdot,t,s)) \Big\|_{\bar{r}}
\chi_n(\|u(\cdot,s)\|_q) \, \big\| [1+|u(\cdot,s)|^{2\alpha} \big]
 \big\|_{\frac{q}{2\alpha}} ds \Big)^{\beta/2}\\
\leq  & C \Big( E\int_0^t (t-s)^{-\frac{d}{2} + \frac{d}{4\bar{r}}} (1+n^{2\alpha}) ds\Big)^{\beta/2}.
\end{align*}
Note that  the inequalities $d<4$, $q\geq 3$, $\alpha \in (0,1]$  and $\beta > \frac{q}{\alpha}$ yield  
 $-\frac{d}{2}+\frac{d}{4\bar{r}} > -1$. 
 Hence, for any $u\in {\mathcal H}_T$ we obtain the existence of $C>0$ such that  
\begin{equation}\label{LLL}
\| {\mathcal L}(u)\|_{{\mathcal H}_T} \leq C (1+n^\alpha) T^{\frac{1}{2} [ -\frac{d}{2}+\frac{d}{4\bar r} +1]}
\end{equation}
holds for every $n\geq 1$,   and therefore, $\mathcal{L}$ is also a mapping from
$\mathcal{H}_T$ to $\mathcal{H}_T$. Recall that $\sigma$ is
Lipschitz. Therefore,  an argument similar to that used to prove
\eqref{claim_m_1} with $q$ instead of $\rho$ shows that for
$u,v\in {\mathcal H}_T$, we have
\begin{equation} \label{claim_L}
\| \delta(u,v,\cdot,s)\|_q \leq C (1+n^\alpha)
\|u(.,s)-v(\cdot,s)\|_q,
\end{equation}
for
$$\delta(u,v,y,s):= \chi_n(\|u(\cdot,s)\|_q) \sigma(u(y,s)) -
\chi_n(\|v(\cdot,s)\|_q) \sigma(v(y,s)).$$
Recall that $\alpha \in (0,1]$ and $\beta > \frac{q}{\alpha}$, so that $\beta >q$; thus  the H\"older, Burkholder-Davies-Gundy
and Minkowski inequalities together with \eqref{G-est-1} yield for
$u,$ $v$ in $\mathcal{H}_T$
\begin{align*}
E\|  \mathcal{L}(u) (\cdot,s)-&\mathcal{L}(v)(\cdot,s)\|_q^\beta
 \leq
C \int_{{\mathcal{D}}}E|\mathcal{L}(u)(\cdot,s)-\mathcal{L}(v)(\cdot,s)|^\beta\; dx \\
& \leq C \int_{\mathcal{D}} \Big| \int_0^t \!\! \int_{\mathcal{D}}
G^2(x,y,t-s)
\big| \delta(u,v,y,s)\big|^2 dyds\Big|^{\beta /2} dx \\
&\leq  C E \Big| \int_0^t (t-s)^{-\frac{d}{2}} \Big\| \exp(
h(\cdot,t,s)) * \delta^2(u,v,\cdot,s) \Big\|_{\beta/2} ds
\Big|^{\beta/2}.
\end{align*}
The inequality $\beta >q$ implies the existence of $r_2\in
(1,\infty)$ such that $\frac{2}{\beta} + 1 = \frac{2}{q} +
\frac{1}{r_2}$. Using once more the
assumptions on $q,\alpha$ and $\beta$ in Condition (C$_\alpha$), 
 in particular the assumption $\beta (6-q)<6q$ for $d=3$ and $q\in [3,6)$,  
 we deduce $-\frac{d}{2} + \frac{d}{4 r_2}>-1$.   Thus Young's inequality and 
\eqref{claim_L} imply 
\begin{align*}
E\| {\mathcal L}(u)(\cdot,s)& - {\mathcal L}(v)(\cdot,s)\|_q^\beta
\leq C E \Big| \int_0^t (t-s)^{-\frac{d}{2} + \frac{d}{4 r_2}}
\| \delta(u,v,\cdot,s)\|_{q/2} ds \Big|^{\beta/2}\\
&\leq  C (1+n^{\alpha \beta}) T^{(-\frac{d}{2} + \frac{d}{4 r_2}
+1)\frac{\beta}{2}} \sup\Big{\{}
E\|u(\cdot,s)-v(\cdot,s)\|_q^\beta : t\in [0,T]\Big{\}},
\end{align*}
and therefore,
\begin{equation}\label{L-1}
 \|\mathcal{L}(u)-\mathcal{L}(v)\|_{\mathcal{H}_T}
 \leq C\,(1+n^\alpha)\,  T^{-\frac{d}{4}+\frac{d}{8r_2}+\frac{1}{2}}\, \|u-v\|_{\mathcal{H}_T}.
\end{equation}
So, for fixed $n$ and $T>0$, the map  $\mathcal{L}$ is also a
Lipschitz mapping from $\mathcal{H}_T$ to $\mathcal{H}_T$.

The upper estimates \eqref{M_1-2} and \eqref{L-1}
 imply that the mapping $\mathcal{M}+\mathcal{L}$ is Lipschitz from $\mathcal{H}_T$
to $\mathcal{H}_T$ with the Lipschitz constant bounded by
$$C(n,T):= C \Big{[} n^3 T^{-\frac{d+2}{4}+\frac{d}{4r}+1} + C
n^\alpha T^{-\frac{d}{4}+\frac{d}{8r_2}+\frac{1}{2}}\Big{]}.$$

For fixed $n\geq 1$,  there exists $T_0(n)$  sufficiently small
(which does not depend on $u_0$) such that $C(n,T)<1$ for $T\leq
T_0(n)$, so that $\mathcal{M}+\mathcal{L}$ is a contraction
mapping from the space ${\mathcal H}_T$ into itself.
 Thus for $T\leq T_0(n)$,  the map ${\mathcal M}+{\mathcal L}$ has a unique fixed point in the set $\Big{\{}
u\in\mathcal{H}_T:\; u(\cdot,0)=u_0 \Big{\}}$. This implies that
in $[0,T]$, for $T\leq T_0(n)$, there exists a unique solution
$u_n$ for the SPDE \eqref{u_n}.

If $T>T_0(n)$, let $\bar{u}_0(x)=u_n(x, T_0(n))$ and
$\bar{W}(t,x)=W(T_0(n)+t,x)$; then $\dot{\bar{W}}$ is a space-time
white noise related to the filtration $({\mathcal
F}_{T_0(n)+t},t\geq 0)$
 independent of $ {\mathcal F}_{T_0(n)}$.
A similar argument proves the existence and uniqueness of the
solution $\bar{u}_n$ to an equation
 similar to \eqref{u_n} with $u_0$ and $W$
replaced by $\bar{u}_0$ and $\bar{W}$ respectively. Hence,
\eqref{u_n} has a unique solution $u_n$
 on the interval $[0, 2 T_0(n)]$, defined
by $u_n(x,t) := \bar{u}_n(x,t-T_0(n))$ for $t\in [T_0(n), 2
T_0(n)]$. Since there exists $N\geq 1$ such that $N T_0(n) \geq T$
an easy induction argument concludes the proof.
\end{proof}
\subsection{Some bound for the stochastic integral}
We shall prove moment estimates for the  (space-time) uniform norm   for
$\mathcal{L}(u_n)$ which will be needed later.

We set
\[ 
\|\mathcal{L}(u_n)\|_{L^\infty}:=\sup_{t\in[0,T]}\sup_{x\in{\mathcal{D}}}|\mathcal{L}(u_n)(x,t)|.
\] 
\begin{lemma}
Let  $\sigma$ satisfy Condition \eqref{undif} with $\alpha \in (0,1]$, let Condition {\bf (C$_\alpha$)} hold,  
and let $u_n$ be the solution to 
the SPDE \eqref{u_n}. Furthermore, suppose that $q> \frac{2\alpha  d}{4-d}$. 
 Then for any  $p\in [1,\infty)$   there exists a positive constant $C_p(T)$ such that for every  $n\geq 1$, 
we have:
\begin{equation}\label{sup_L_n}
E\Big{(}\|\mathcal{L}(u_n)\|_{L^\infty}^{2p}\Big{)} \leq C_{p}(T) n^{2\alpha p} .
\end{equation}
\end{lemma}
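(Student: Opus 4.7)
The approach is the factorization method of Da Prato-Kwapie\'n-Zabczyk applied to the stochastic convolution. Fixing a parameter $\alpha' \in (0, \tfrac{1}{2} - \tfrac{d}{8})$, define the auxiliary field
$$Y_n(z,r) := \int_0^r \!\! \int_{{\mathcal D}} (r-s)^{-\alpha'} G(z,y,r-s) \, \chi_n(\|u_n(\cdot,s)\|_q) \, \sigma(u_n(y,s)) \, W(dy,ds).$$
The Beta identity $\int_s^t (t-r)^{\alpha'-1}(r-s)^{-\alpha'} dr = \pi/\sin(\pi\alpha')$, the semigroup property $G(x,y,t-s) = \int_{{\mathcal D}} G(x,z,t-r) G(z,y,r-s)\, dz$, and a stochastic Fubini exchange yield
$${\mathcal L}(u_n)(x,t) = \frac{\sin(\pi\alpha')}{\pi} \int_0^t (t-r)^{\alpha'-1} \int_{{\mathcal D}} G(x,z,t-r) Y_n(z,r) \, dz \, dr.$$
This representation trades the stochastic integral for a deterministic one with an extra integrable singularity, opening the way to space-time sup bounds.

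The first step is a uniform moment bound on $Y_n$. By the Burkholder-Davis-Gundy inequality followed by Young's inequality in $y$ applied with conjugate exponents $r_1 = q/(q-2\alpha)$ and $r_2 = q/(2\alpha)$, together with the estimate $\|G^2(z,\cdot,t)\|_{r_1} \leq C t^{-d/2 + d/(4 r_1)}$ derived from \eqref{G-est-1}--\eqref{expexp}, the growth assumption \eqref{undif}, and the fact that the cut-off forces $\|u_n(\cdot,s)\|_q \leq n+1$ on the support of $\chi_n$, one obtains
$$\sup_{(z,r) \in {\mathcal D} \times [0,T]} E|Y_n(z,r)|^{2p} \leq C_p(T)\, n^{2 \alpha p},$$
provided $-2\alpha' - \tfrac{d}{2} + \tfrac{d(q-2\alpha)}{4q} > -1$, equivalently $\alpha' < \tfrac{1}{2} - \tfrac{d}{8} - \tfrac{d \alpha}{4q}$. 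This inequality admits a strictly positive solution $\alpha'$ exactly under the standing hypothesis $q > 2\alpha d/(4-d)$.

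To conclude, fix $q_2 > 1$ close to $1$ (so that its conjugate exponent $q_2'$ is large), apply H\"older's inequality in $y$ together with the estimate $\|G(x,\cdot,t-r)\|_{q_2} \leq C(t-r)^{-d(q_2-1)/(4q_2)}$, and then H\"older in $r$ with exponents $2p/(2p-1)$ and $2p$. For $p$ large enough the resulting time integrand $(t-r)^{[\alpha' - 1 - d(q_2-1)/(4q_2)] \cdot 2p/(2p-1)}$ is integrable near $r = t$, which produces
$$\sup_{x,t} |{\mathcal L}(u_n)(x,t)|^{2p} \leq C_p(T) \int_0^T \|Y_n(\cdot,r)\|_{q_2'}^{2p}\, dr.$$
Taking expectations, bounding $\|Y_n(\cdot,r)\|_{q_2'}^{2p}$ by a constant times $\int_{{\mathcal D}} |Y_n(y,r)|^{2p}\, dy$ via H\"older (valid since $q_2' \leq 2p$ for $p$ large), and inserting the previous moment bound delivers \eqref{sup_L_n} for all $p \geq p_0$; the remaining values $p \in [1, p_0)$ follow from Lyapunov's inequality applied to the estimate for $p_0$. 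The main obstacle is the simultaneous tuning of the four parameters $\alpha', q_2, r_1, p$ so that every singular time integral converges; the assumption $q > 2\alpha d/(4-d)$ is exactly what opens a positive window for $\alpha'$, after which $q_2$ can be taken close to $1$ and $p$ sufficiently large, both freely.
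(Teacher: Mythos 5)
Your proof is correct, but it takes a genuinely different route from the paper. The paper establishes \eqref{sup_L_n} by proving pointwise moment bounds $E|{\mathcal L}(u_n)(x,t)|^{2p}\leq C_p n^{2\alpha p}$ together with moment estimates of the space and time increments of ${\mathcal L}(u_n)$ (interpolating between \eqref{G-est-1}, \eqref{G-est-2} and \eqref{G-est-3} via the Taylor formula), and then invokes the Garsia--Rodemich--Rumsey lemma to upgrade these to a bound on the $L^\infty$ norm; the same constraint $q>\frac{2\alpha d}{4-d}$ appears there as the condition $-\frac{d}{4}\big(1+\frac{2\alpha}{\tilde q}\big)>-1$. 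You instead use the Da Prato--Kwapie\'n--Zabczyk factorization: after checking $\sup_{z,r}E|Y_n(z,r)|^{2p}\leq C_p(T)n^{2\alpha p}$ (which is essentially the paper's pointwise estimate with the extra singularity $(r-s)^{-2\alpha'}$, and your window $\alpha'<\frac12-\frac{d}{8}-\frac{d\alpha}{4q}$ is nonempty exactly under $q>\frac{2\alpha d}{4-d}$), the deterministic convolution with the kernel $(t-r)^{\alpha'-1}G(x,\cdot,t-r)$ is controlled in sup norm by two applications of H\"older, so no increment estimates and no Garsia lemma are needed; the restriction to large $p$ is harmless by Lyapunov's inequality. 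Your approach is arguably cleaner for the sole purpose of the $L^\infty$ bound (and it is in fact the strategy the paper itself adopts later, in Section \ref{path}, for the process ${\mathcal K}(u_n)$), while the paper's increment-based argument yields as a by-product explicit H\"older exponents for ${\mathcal L}(u_n)$ which are reused elsewhere. Two cosmetic remarks: the inequality you invoke in $y$ with conjugate exponents $\frac{q}{q-2\alpha}$ and $\frac{q}{2\alpha}$ is H\"older's rather than Young's, and the supremum over $(x,t)$ should be understood for the continuous version of ${\mathcal L}(u_n)$, which your factorization representation itself provides.
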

\begin{proof}
Since $d<4$, $\alpha \in (0,1]$ and $q\in [3,\infty)$, we have $q>2\alpha$ and may choose $\tilde{q}\leq q$ with
$\tilde{q}>(2\alpha)\vee \frac{2\alpha d}{4-d}$. 
For $t\in [0,T]$, using the Burkholder-Davis-Gundy inequality,
\eqref{G-est-1}, the growth condition \eqref{undif} on $\sigma$  and H\"older's inequality
with conjugate exponents $\frac{\tilde{q}}{2\alpha}$ and
$\frac{\tilde{q}}{\tilde{q}-2\alpha}$, we obtain for any $t\in [0,T]$ and $x\in
{\mathcal{D}}$
\begin{align*}
E|{\mathcal L}  (u)(x,t)|^{2p}& 
 \leq  C_{p} E\Big| \int_{0}^{t} \int_{\mathcal{D}} (t-s)^{- \frac{d}{2}}
\exp(h(\cdot,t,s)) \chi_{n}(\| u_{n}(\cdot,s)\|_{q} ) [1+|u_n(y,s)|^{2\alpha}] dy ds \Big|^{p}\\
&  \leq   C_{p}  E\Big| \int_{0}^{t} \int_{\mathcal{D}} (t-s)^{-
\frac{d}{2}} \Big\|
\exp(h(\cdot,t,s))\Big\|_{\frac{\tilde{q}}{\tilde{q}-2\alpha}}
\chi_{n}(\| u_{n}(\cdot,s)\|_{q} )
[1+\|u_{n}(y,s)\|_{q}]^{2\alpha} ds \Big|^{p}\\
&\leq  C_{p}  \Big|  \int_{0}^{t} (t-s)^{-\frac{d}{2} + \frac{d (\tilde{q}-2\alpha)}{4\tilde{q}}} n^{2\alpha} ds \Big|^{p}
 \leq C_{p} n^{2\alpha p},
\end{align*}
where as above we let $h(x,t,s)$ be defined by \eqref{undif}. 
The last
inequality holds provided that
$-\frac{d}{4}
\Big{(}1+\frac{2\alpha}{\tilde q}\Big{)}>-1$  which holds true since 
 $ q\geq \tilde{q} >\frac{2\alpha d}{4-d}\vee (2\alpha)$. 

Similar computations using \eqref{G-est-1},  \eqref{G-est-2}
and the Taylor formula imply that for $x, \xi \in {\mathcal{D}}$
and $t\in [0,T]$, we have for $\lambda \in (0,1)$, $\tilde{q}\leq
q$, $p\in [1,\infty)$  and $n\geq 1$:
\begin{align}
E|{\mathcal L} & (u)(x,t)- {\mathcal L}  (u)(\xi,t)|^{2p} \leq  C_p
E\Big| \int_0^t \int_{\mathcal{D}} |G(x,y,t-s) - G(\xi,y,t-s)|^2  \chi_{n}(\| u_{n}(\cdot,s)\|_{q} ) \nonumber \\
&\qquad \times \big[1 + |u_n(y,s)|^{2\alpha} \big] \, dy \, ds \Big|^p \\
\leq & C_p |x-\xi|^{2\lambda p} E\Big| \int_0^t (t-s)^{-\frac{(d+1)\lambda }{2} } (t-s)^{-\frac{d}{2}(1-\lambda)}
\chi_{n}(\| u_{n}(\cdot,s)\|_{q} ) \nonumber \\
&\qquad \times \|
\exp(h(\cdot,t,s))\|_{\frac{\tilde{q}}{\tilde{q}-2\alpha}}
[1+\|u_{n}(\cdot,s)\|_{\tilde{q}}]^{2\alpha} ds \Big|^{p} \nonumber \\
\leq &  C_p |x-\xi|^{2\lambda p} n^{2\alpha p} \Big| \int_0^t (t-s)^{-\frac{d+\lambda}{2}
+ \frac{d (\tilde{q}-2\alpha)}{4\tilde{q}}} ds \Big|^p\nonumber \\
\leq & C_p(T) n^{2\alpha p} |x-\xi|^{2\lambda p}, \label{supx}
\end{align}
provided that  $ -\frac{d+\lambda}{2} + \frac{d(\tilde{q}-2\alpha)}{4\tilde{q}}>-1$, which holds true if 
$0\leq \lambda < \Big{(}2-\frac{d}{2}\Big{)}\wedge 1  $ and $ \tilde{q} > \frac{2\alpha d}{4-d-2\lambda}$. 
Hence, for $q>\frac{2\alpha d}{4-d}$ one can find $\lambda \in (0,1) $ small enough  and
$\tilde{q}$ as above.

Using again the Taylor formula,   \eqref{G-est-1} and
\eqref{G-est-3}, we obtain, for $0\leq t' \leq t \leq T$ and
 $\mu \in [0,1]$
\begin{align}
E|{\mathcal L} & (u)(x,t)- {\mathcal L}  (u)(x,t')|^{2p} \leq  C_p
E \Big| \int_0^t \int_{\mathcal{D}}  \big[  |G(x,y,t-s)|^{2(1-\mu)} + |G(x,y,t'-s)|^{2(1-\mu)} \big] \nonumber \\
&\qquad \times  |G(x,y,t-s)-G(x,y,t')|^{2\mu}
\chi_{n}(\| u_{n}(\cdot,s)\|_{q} ) [1+ |u_n(y,s)|^{2\alpha}] dy ds \Big|^p \nonumber \\
\leq & |t-t'|^{2\mu p}  E\Big| \int_0^t (t-s)^{-2\mu
(\frac{d}{4}+1) - (1-\mu) \frac{d}{2}} \| \exp(
h(\cdot,t,s))\|_{\frac{\tilde{q}}{\tilde{q}-2\alpha}}
\nonumber  \\
&\qquad \quad \times \big( 1+\|u_n(\cdot,s)\|_{\tilde{q}}^{2\alpha}\big)
\chi_n(\|u_n(\cdot,s)\|_q) ds \Big|^p \nonumber \\
\leq & C_p(T)   |t-t'|^{2\mu p} n^{2\alpha p},\label{supt}
\end{align}
where the last inequality holds true if 
$-\frac{d}{2} -2\mu +
\frac{d}{4} \Big{(} 1-\frac{2\alpha}{\tilde q}\Big{)} > -1$;  this is similar to the previous 
requirement used to prove \eqref{supx} 
replacing $\frac{\lambda}{2}$ by $2\mu$. 
Thus, since  $q>\frac{2\alpha d}{4-d}$, we may find $\tilde{q} \in
(\frac{2\alpha d}{4-d} , q]$ and $\mu \in(0,1)$ which satisfy
this constraint, and such that \eqref{supt} holds for any $p\in [1,+\infty)$.

The upper estimates \eqref{supx}, \eqref{supt} imply  the existence of some positive constants 
$\lambda$ and $\mu$, and given  $p\in [1,\infty)$ of some positive constant $C_p(T)$ (independent of
$n$) such that 
for $x,x'\in {\mathcal{D}}$ and $t,t'\in [0,T]$, we have for every $n\geq 1$  
 \[ E|{\mathcal L}  (u)(x,t)- {\mathcal L}  (u)(x',t')|^{2p} \leq  C_p(T)
\big[ |x-\xi|^{2\lambda p} +  |t-t'|^{2\mu p}\big]  n^{2\alpha p}  . \]

 Therefore, the Garsia-Rodemich-Rumsey Lemma
 yields  the upper estimate \eqref{sup_L_n}.
\end{proof}

\subsection{Galerkin approximation}
 In this section we need  some stronger integrability condition on  the
initial condition $u_0$, which is required to be in $L^4({\mathcal D})$. More precisely, we suppose that the
following condition ($\tilde{\mbox{\bf C}_\alpha}$) is satisfied.
\smallskip

\noindent {\bf Condition ($\tilde{\mbox{\bf C}_\alpha}$)} {\it One of the following properties is satisfied:

(i) Either $d=1,2$ and $q\in [4,\infty)$, \quad or $d=3$ and $q\in [6,\infty)$; 

(ii) $d=3$ and $q\geq 4$ is such that $q\in \big( 6(1-\alpha)\vee (6\alpha) , 6\big)$. 

\smallskip

\noindent Note that if Condition ($\tilde{\mbox{\bf C}_\alpha}$) is satisfied, we have $\|\cdot \|_4\leq C \| \cdot \|_q$ for some positive
constant $C$. 
}

For any $n\geq 1$, we define $$v_n:=u_n-{\mathcal L}(u_n).$$ Then,
formally, $v_n$ satisfies the following equation:
 \begin{align}\label{v_n_formal}
& \partial_t v_n+ [\Delta^2-\Delta]  v_n - (\Delta - Id)
\Big{(}\chi_n(\|v_n+\mathcal{L}(u_n)\|_q)f(v_n
+\mathcal{L}(u_n)\Big{)} =0\;{\rm
in}\;{\mathcal{D}}\times[0,T), \\
&v_n(x,0)=u_0(x)\;\;{\rm in}\;\;{\mathcal{D}}, \nonumber \\
&\frac{\partial v_n}{\partial \nu}=\frac{\partial\Delta
v_n}{\partial \nu}=0\;\;{\rm on}\;\;\partial{\mathcal{D}}\times
[0,T). \nonumber 
\end{align}
 For a strict definition of solution, we say that $v_n$ is a weak
solution of the above equation \eqref{v_n_formal} 
if for all $\phi\in {\mathcal
C}^4({\mathcal{D}})$ with $\frac{\partial\phi}{\partial
\nu}=\frac{\partial\Delta \phi}{\partial \nu}=0$ on $\partial
{\mathcal{D}}$, we have:
\begin{align*}   
\int_{\mathcal{D}}\Big(v_n(x,t)-u_0(x)\Big) &\phi(x)\, dx
=  \int_0^t \int_{{\mathcal{D}}}\Big\{\big[ -\Delta^2 + \Delta \big] \phi(x)\, v_n(x,s) \\
& +\big[ \Delta\phi(x) - \phi(x)\big] \chi_n(\|v_n+\mathcal{L}(u_n)\|_q)f(v_n+ {\mathcal L}(u_n)) \Big\}\; dx ds. \nonumber 
\end{align*}  
Using the Green's function $G$ defined by \eqref{Green}, we deduce
the integral form of this equation: 
\begin{align}\label{integral-form-v_n}
v_n(x,t) = \int_{{\mathcal{D}}} &  u_0(y) G(x,y,t)\;dy
+\int_0^t \int_{{\mathcal{D}}} \big[ \Delta G(x,y,t-s) - G(x,y,t-s)\big]\nonumber \\
&  \times \chi_n(\|v_n+\mathcal{L}(u_n)\|_q)
f\big(v_n(y,s)+ {\mathcal L}(u_n)(y,s)\big)   \;dyds . 
\end{align}

We will use the Galerkin method to prove the existence of the
solution $v_n$ for the equation \eqref{v_n_formal}. Let us denote
by
 $0=\lambda_0<\lambda_1\leq\lambda_2\leq\cdots$  the
eigenvalues of Neumann Laplacian operator inducing
$\{w_i\}_{i=0}^\infty$ as an orthonormal basis of
$L^2({\mathcal{D}})$ of eigenfunctions, i.e., $(w_i,
w_j)_{L^2({\mathcal{D}})}=\delta_{ij}$ and
\begin{equation}
-\lambda_i w_i=\Delta w_i\;\;{\rm in} \;\;{\mathcal{D}},\;\;\;\;
\frac{\partial w_i}{\partial \nu}=0\;\;{\rm on}\;\; \partial
{\mathcal{D}}\;\;{\rm for}\;\; i=0,1,2,\cdots.
\end{equation}
 Let $P_m$ denote the
orthogonal projection from $L^2(\mathcal{D})$ onto ${\rm span}\{
w_0, w_1, \cdots, w_m \}.$
For every $m=0,1,2,\cdots$ we consider the function $v_n^m$
$$ v_n^m(x,t)=\sum_{i=0}^m \rho_i^m(t)w_i(x),$$
defined by the Galerkin ansatz, where
\begin{equation}   \label{s-1}
\left\{ 
\begin{array}{l} \frac{\partial}{\partial t} v_n^m  +\big( \Delta^2 - \Delta \big)
v_n^m- \big( \Delta - Id \big) \Big[\chi_n(\|v_n^m
+\mathcal{L}(u_n)\|_q)\, P_m \big(  f(v_n^m+\mathcal{L}(u_n)) \big)\Big]  =0,  \\ 
v^n_m(x,0)=P_m(u_0) \;\;{\rm in}\;\;{\mathcal{D}},\;\;\;
 \frac{\partial v^m_n}{\partial \nu}=\frac{\partial\Delta
v^m_n}{\partial \nu}=0\;\;{\rm on}\;\;\partial{\mathcal{D}}.
\end{array}
\right.
\end{equation}

This yields an  initial value problem of ODE satisfied by
$\rho_i^m(t)$ for $i=0, 1,\cdots , m$. By  standard arguments of
ODE, this initial value problem has a local solution. We will show
that a global solution exists.

Multiplying by $v_n^m$  both sides of  $\eqref{s-1},$ we obtain
\begin{align}\label{est-1}
\frac{1}{2}\frac{d}{dt} & \|v_n^m(\cdot,t)\|_{2}^2 +\|\Delta v_n^m(.,t)\|_{2}^2 + \| \nabla v_n^m(\cdot,t)\|_2^2 \nonumber \\
& =\chi_n(\|v_n^m(\cdot,t)+\mathcal{L}(u_n)(\cdot,t)\|_q)
\int_{\mathcal{D}}
f\big(v_n^m(x,t)+\mathcal{L}(u_n)(x,t)\big)  \big[ \Delta v^m_n(x,t) - v^m_n(x,t)\big]  \;dx \nonumber \\
&= \sum_{i=1}^3 T_i(t),
\end{align}
where
\begin{eqnarray*}
T_1(t)&=&  \chi_n(\|v_n^m(\cdot, t)+\mathcal{L}(u_n)(\cdot,t)\|_q)
\int_{\mathcal{D}}
\big[ f\big(v^m_n(x,t) + {\mathcal L}(u_n)(x,t)\big) - f\big( v^m_n(x,t)\big) \big]\\
&&\qquad  \times
 \big[ \Delta v^m_n(x,t) - v^m_n(x,t)\big] dx , \\
T_2(t)&=&  \chi_n(\|v_n^m(\cdot,t)+\mathcal{L}(u_n)(\cdot,t)\|_q)
\int_{\mathcal{D}}
f\big( v^m_n(x,t)\big) \Delta v^m_n(x,t) dx , \\
T_3(t)&=& - \chi_n(\|v_n^m(\cdot,t)+\mathcal{L}(u_n)(\cdot,t)\|_q)
\int_{\mathcal{D}} f\big( v^m_n(x,t)\big)  v^m_n(x,t) dx .
\end{eqnarray*}
Since $f$ is a polynomial of degree 3, then we have for $x,y\in {\mathbb R}$: 
$$|f(x+y)-f(x)|\leq
c|y|(1+x^2+y^2).$$   
Thus by Cauchy-Schwarz and Young inequality
we obtain for any $\varepsilon>0$
\begin{align*}
T_1(t)\leq & \; C
\chi_n(\|v_n^m(\cdot)+\mathcal{L}(u_n)(\cdot,t)\|_q)
\int_{\mathcal{D}}
|{\mathcal L}(u_n)(x,t)| \,  \big[ 1+|v^m_n(x,t)|^2 + |{\mathcal L}(u_n)(x,t)|^2\big]\\
&\qquad \times  \big[ |\Delta v^m_n(x,t)| + |v^m_n(x,t)|\big] dx\\
\leq & \; C \;
\chi_n(\|v_n^m(\cdot,t)+\mathcal{L}(u_n)(\cdot,t)\|_q) \|{\mathcal
L}(u_n)(\cdot,t)\|_\infty
\big[ 1+\|v^m_n(\cdot,t)^2\|_2+ \|{\mathcal L}(u_n)(\cdot,t)^2\|_2\big] \\
& \qquad \times \big[  \|\Delta v^m_n(\cdot ,t)\|_2 + \|v^m_n(\cdot ,t)\|_2  \big]\\
\leq &\;  \varepsilon   \big[  \|\Delta v^m_n(\cdot ,t)\|_2^2 + \|v^m_n(\cdot ,t)\|_2^2  \big]\\
&+ \frac{C}{\varepsilon}
\chi_n(\|v_n^m(\cdot,t)+\mathcal{L}(u_n)(\cdot,t)\|_q)
 \|{\mathcal L}(u_n)(\cdot,t)\|_\infty^2 \big[ 1 + \|v^m_n(\cdot,t)\|_4^4 + \| {\mathcal L}(u_n)(\cdot,t)\|_4^4\big].
\end{align*}
Observe that $f(x)=ax^3 + g(x)$, where $a>0$ and $g$ is a
polynomial of degree $2$.
 Hence, $f'(x)=3 a x^2 + 2bx +c$ for some real constants
$b$, $c$, and $f'(x)\geq 2ax^2 - \tilde{c}$ for some non-negative
constant $\tilde{c}$. So, an integration by parts yields for any
$\varepsilon >0$
\begin{align*}
T_2&(t) =-\chi_n(\|v_n^m(\cdot,t)+\mathcal{L}(u_n)(\cdot,t)\|_q)
\int_{\mathcal{D}}
f'(v^m_n(x,t)) |\nabla v^m_n(x,t)|^2 dx\\
\leq & \; C \chi_n(\|v_n^m(\cdot,t)+\mathcal{L}(u_n)(\cdot,t)\|_q)
\int_{\mathcal{D}}
\big[ -2 a |v^m_n(x,t)|^2 |\nabla v^m_n(x,t)|^2 + \tilde{c} |\nabla v^m_n(x,t)|^2 \big] dx \\
\leq & \; - C
\chi_n(\|v_n^m(\cdot,t)+\mathcal{L}(u_n)(\cdot,t)\|_q)
\int_{\mathcal{D}}
v^m_n(x,t) \Delta v^m_n(x,t) dx\\
\leq &\;  \varepsilon \|\Delta v^m_n(\cdot,t)\|^2_2 +
\frac{C}{\varepsilon}
\chi_n(\|v_n^m(\cdot,t)+\mathcal{L}(u_n)(\cdot,t)\|_q)
\|v^m_n(\cdot,t)\|_2^2.
\end{align*}
Finally, since $x f(x) \geq \frac{7}{8} a x^4 - \tilde{C}$ with
$a$, $\tilde{C}>0$, we obtain
\[ T_3(t)\leq  \chi_n(\|v_n^m(\cdot,t)+\mathcal{L}(u_n)(\cdot,t)\|_q)
\Big[ \int_{\mathcal{D}} - \frac{7}{8} a |v^m_n(x,t)|^4 dx
 + \tilde{C}|{\mathcal{D}}| \Big] . \]
The above upper estimates of $T_i(t)$, $i=1,2,3$,  imply that for $\varepsilon >0$ small enough,
\begin{align} \label{d-dt}
\frac{1}{2}& \frac{d}{dt}\|v_n^m(\cdot,t)\|_{2}^2  + \frac{1}{2}
\|\Delta v_n^m(\cdot,t)\|_{2}^2 +  \| \nabla v^m_n(\cdot,t)\|_2^2
\leq
C \chi_n(\|v_n^m(\cdot,t)+\mathcal{L}(u_n)(.,t)\|_q  )\nonumber \\
&\times  \Big( \|\mathcal{L}(u_n(\cdot,t))\|_\infty^2 \big[
1+\|v_n^m(\cdot,t)\|_{4}^4+\|\mathcal{L}(u_n(\cdot,t))\|_4^4\big]
+ \|v^m_n(\cdot,t)\|_2^2 +1\Big).
\end{align}

Since 
$$\|v_n^m(\cdot,0)\|_{2}=\|P_mu_0\|_{2}\leq\|u_0\|_{2},$$
 integrating \eqref{d-dt} from $0$ to  $t\in(0,T] $, 
and using H\"older's and Young's inequality, we obtain
\begin{align}\label{est-v_n^m}
 \| v_n^m(\cdot, t)\|_{2}^2 &
+\int_0^t\|\Delta v_n^m(\cdot, s)\|_{2}^2\;ds \leq  \|u_0\|_{2}^2
+CT\Big(1+\|\mathcal{L}(u_n)\|^6_{L^\infty}\Big)
\nonumber \\
&+C\Big(1+\|\mathcal{L}(u_n)\|_{L^\infty}^2\Big)\int_{0}^t
\chi_n(\|v_n^m(\cdot,s)+\mathcal{L}(u_n)(\cdot,s)\|_q)  \big( \|v_n^m(\cdot,s)\|_{4}^4+1\big) \;ds.
\end{align}
Since we have imposed $q\in [4,\infty)$,  if the cut-off function $\chi_n$ applied to the $\|.\|_q$
norm of some function $U$ is not zero, we deduce that $\|U\|_4 \leq C(n+1) \leq Cn$. Recall that $|\chi_n|\leq 1$; 
thus the triangular inequality yields
\begin{align}\label{est^3rd^term}
 \int_{0}^t\!\!   \chi_n(\|v_n^m(\cdot,s) & +\mathcal{L}(u_n)(\cdot,s)\|_q)  \big( \|v_n^m(\cdot,s)\|_{4}^4 +1\big) ds
\leq  C T \big( 1+ \|\mathcal{L}(u_n)\|^4_{L^\infty} \big)  
\nonumber \\
 & \qquad \qquad
 +  C \int_{0}^t\!\! \chi_n(\|v_n^m(\cdot,s)+\mathcal{L}(u_n)(\cdot,s)\|_q)
 \|v_n^m(\cdot,s)+\mathcal{L}(u_n)(\cdot,s)\|_{4}^4\;ds \nonumber \\
\leq & C T  \big( 1+\|\mathcal{L}(u_n)\|_{L^\infty}^4+ n^4 \big).
\end{align}

The upper estimates \eqref{est-v_n^m} and \eqref{est^3rd^term}
imply that
\begin{align}
&\sup_{t\in[0,T]}\|v_n^m(\cdot,t)\|_{2}^2 \leq \|u_0\|_2^2 + C
\big( 1+\|{\mathcal L}(u_n)\|_{L^\infty}^6\big)
+ C n^4 T \big( 1+  \|{\mathcal L}(u_n)\|_{L^\infty}^4   \big)  ,\nonumber \\
&\int_0^T [\|v^m_n(\cdot,t)\|_2^2 + \|\Delta v^m_n(\cdot,t)\|_2^2]
dt \leq  (T+1) \|u_0\|^2 + C (T^2+1) (1+\|{\mathcal
L}(u_n)\|_{L^\infty}^6)
\nonumber \\
&\qquad \qquad \qquad \qquad\qquad \qquad \qquad \qquad
+ C n^4 (T^2+1) (1+\|{\mathcal L}(u_n)\|_{L^\infty}^2). \label{estimH2O}
\end{align}
Since the $H^{2}({\mathcal{D}})$-norm  is equivalent to
$\Big{(}\int_{{\mathcal{D}}} \big( |\Delta u(x)|^2+|u(x)|^2\big) \;
dx\Big{)}^{\frac{1}{2}}$ under the boundary condition
$\frac{\partial u}{\partial \nu}=\frac{\partial\Delta u}{\partial
\nu}=0$ on $\partial{\mathcal{D}}$, the right-hand side of
\eqref{estimH2O} depends on $n$ but  is independent of the index
$m$. Thus, a standard weak compactness argument proves that for
fixed $n$,
 as $m\to \infty$, a subsequence of  $(v^m_n, m\geq 1)$ converges weakly
in $ L^2(0,T;H^2({\mathcal{D}}))$ to a solution $v_n$ of
\eqref{v_n_formal} with homogeneous Neumann  boundary conditions.

Let  $(\epsilon_k)$ denote the orthonormal basis defined in
Section \ref{secGreen} and set
\begin{equation}\label{B0}
B(u_0):=\frac{1}{2}\Big\|\sum_{k\in {\mathbb N}^{d}}
 [\lambda_k+1]^{-\frac{1}{2}}(u_0,\epsilon_k)_{L^2({\mathcal{D}})}\epsilon_k \Big\|_{2}^2
= \frac{1}{2}   \sum_{k\in {\mathbb N}^{d}}[\lambda_k+1]^{-1}
(u_0,\epsilon_k)_{L^2({\mathcal{D}})}^2 .
\end{equation}

Note that if ${\mathcal{D}}$ is the unitary cube,  we have
$B(u_0)\leq \frac{1}{2} \|u_0\|_2^2$. The following lemma
provides estimates of the $L^4({\mathcal{D}})$-norm of $u_n$.

\begin{lemma}\label{bound_u_n}
Let $\sigma$ be Lipschitz and satisfy the sub linearity condition \eqref{undif} with $\alpha \in (0,1]$,  and let 
$u_0\in L^q({\mathcal D})$ where $q$ satisfies Condition    {\bf ($\tilde{ \mbox{\bf C}_\alpha}$)}.  Let $u_n$ be the solution to 
the SPDE \eqref{u_n} and $B(u_0)$ be defined by \eqref{B0}. Then  
there exists a constant $C:=C(t,{\mathcal{D}})$ independent of the
index $n$ satisfying
\begin{equation}\label{bound-u_n-siki}
\int_0^t \!\!\chi_n(\|u_n(\cdot,s)\|_q)\|u_n(\cdot,s)\|_{4}^4ds
\leq C \Big\{\! 1+ B(u_0) + \!\int_0^t \!\!
\chi_n(\|u_n(\cdot,s)\|_q) \|\mathcal{L}(u_n)(\cdot,s)\|_4^4 ds
\Big\} .
\end{equation}
\end{lemma}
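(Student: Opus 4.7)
\medskip
\noindent\emph{Proof plan.} I would work at the level of the Galerkin approximation $v_n^m$ defined by \eqref{s-1}, where every manipulation is justified, and then pass to the limit $m\to\infty$. The key observation is that the Galerkin basis $(w_i)$ consists of eigenfunctions of the Neumann Laplacian, with $-\Delta w_i = \lambda_i w_i$; hence the operator $(Id-\Delta)^{-1}$ is diagonal in this basis (sending $w_i$ to $(1+\lambda_i)^{-1} w_i$) and therefore preserves the Galerkin span $\mathrm{span}\{w_0,\ldots,w_m\}$. Thus I may take the $L^2$ inner product of \eqref{s-1} with the admissible test function $\phi := (Id-\Delta)^{-1} v_n^m$.

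For that test function I would use the algebraic identities $(\Delta^2-\Delta)(Id-\Delta)^{-1}=-\Delta$ and $(\Delta-Id)(Id-\Delta)^{-1}=-Id$, together with the self-adjointness of $(Id-\Delta)^{\pm 1}$ and of $P_m$ under Neumann boundary conditions, to derive the energy identity
\begin{equation*}
\tfrac{1}{2}\tfrac{d}{dt}\|(Id-\Delta)^{-1/2} v_n^m(\cdot,t)\|_2^2 + \|\nabla v_n^m(\cdot,t)\|_2^2 + \chi_n(\|u_n^m(\cdot,t)\|_q) \int_{\mathcal{D}} f(u_n^m(x,t))\, v_n^m(x,t)\, dx = 0,
\end{equation*}
where I write $u_n^m := v_n^m + \mathcal{L}(u_n)$ (so $\chi_n$ of its $L^q$-norm is exactly the cutoff appearing in \eqref{s-1}). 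Integrating in time from $0$ to $t$ and observing that $\|(Id-\Delta)^{-1/2} P_m u_0\|_2^2 \leq \|(Id-\Delta)^{-1/2} u_0\|_2^2 = 2 B(u_0)$ (since the Neumann Laplacian basis coincides with $(\epsilon_k)$ on the cube), I would arrive at
\begin{equation*}
\int_0^t \chi_n(\|u_n^m\|_q)\int_{\mathcal{D}} f(u_n^m) v_n^m \, dx\, ds \leq B(u_0).
\end{equation*}

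To extract a bound on $\|u_n^m\|_4^4$, I would split $f(u_n^m)\, v_n^m = f(u_n^m)\, u_n^m - f(u_n^m)\, \mathcal{L}(u_n)$. The first piece is bounded below using the cubic-leading-coefficient bound already exploited in the proof of $T_3$, namely $x f(x) \geq \tfrac{7}{8} a\, x^4 - \tilde{C}$, giving $\int f(u_n^m)\, u_n^m \, dx \geq \tfrac{7}{8} a \|u_n^m\|_4^4 - \tilde{C}|\mathcal{D}|$. For the second piece, using $|f(x)|\leq C(1+|x|^3)$ and Young's inequality with exponents $(4/3,4)$, I would bound
\begin{equation*}
\Big| \int f(u_n^m)\, \mathcal{L}(u_n)\, dx\Big| \leq \varepsilon\, \|u_n^m\|_4^4 + C_\varepsilon \|\mathcal{L}(u_n)\|_4^4 + C,
\end{equation*}
and absorb the $\varepsilon\|u_n^m\|_4^4$ term (after multiplication by $\chi_n$ and integration in $s$) into the left-hand side. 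This yields the bound of the lemma with $u_n^m$ in place of $u_n$.

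Finally I would pass to the limit $m\to\infty$. On the right-hand side, the cutoff $\chi_n$ is bounded by $1$, so dominated convergence handles the $\|\mathcal{L}(u_n)\|_4^4$ integral once $\chi_n(\|u_n^m\|_q) \to \chi_n(\|u_n\|_q)$; on the left-hand side I would invoke Fatou's lemma (or weak lower-semicontinuity of $v\mapsto \int_0^t \chi_n(\|v+\mathcal{L}(u_n)\|_q)\|v+\mathcal{L}(u_n)\|_4^4\,ds$). The main obstacle is precisely this passage to the limit: weak convergence of $v_n^m$ to $v_n$ in $L^2(0,T;H^2)$, as provided in the text, is not quite enough by itself, and I would need to complement it with an Aubin--Lions type compactness argument (using the energy estimate \eqref{estimH2O} and the equation for $\partial_t v_n^m$) to obtain strong convergence in $L^q((0,T)\times\mathcal{D})$, which then ensures continuity of the cutoff and allows Fatou to conclude.
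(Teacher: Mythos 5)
Your proposal follows essentially the same route as the paper's proof: the paper applies $A^{-1}$ with $A=-\Delta+Id$ to the equation for $v_n$, pairs with $v_n$, discards the nonnegative term $\sum_k\lambda_k\rho_k^2=\|\nabla v_n\|_2^2$ to obtain $\int_0^t(Q,v_n)\,ds\le B(u_0)$, and then splits $f(u_n)v_n=f(u_n)u_n-f(u_n)\mathcal{L}(u_n)$, using the coercivity $xf(x)\ge c x^4-\tilde C$ and Young's inequality with exponents $(4/3,4)$ exactly as you do. The only difference is that you carry out the computation at the Galerkin level and pass to the limit $m\to\infty$ via an Aubin--Lions argument, whereas the paper performs the same manipulations directly on the limit $v_n$ expanded in the eigenbasis $(\epsilon_k)$, so your version is, if anything, more careful on the very point you identify as the main obstacle.
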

\begin{proof}
 Using the orthonormal basis $(\epsilon_k)$ defined at the beginning of Section \ref{secGreen}, we write
$v_n\in L^2({\mathcal{D}})$ as
$$v_n(x,t)=\displaystyle{\sum_{k\in {\mathbb N}^d}}
\rho_k(t)\epsilon_k(x).$$

To ease  notation, for $x\in {\mathcal D}$ and $s\in [0,T]$  we set
$$Q(x,s):=\chi_n(\|v_n(\cdot,s)+{\mathcal L}(u_n)(\cdot,s)\|_q)
f(u_n(x,s)).$$ Then the equation \eqref{v_n_formal} is written as
follows
\begin{equation}\label{eq2}
\partial_t v_n+ (-\Delta+Id)(-\Delta)v_n +(-\Delta+Id)Q=0,
\end{equation}
with the boundary conditions $v_n(x,0)=u_0(x)$  in
${\mathcal{D}}$, and $\frac{\partial v_n}{\partial
\nu}=\frac{\partial\Delta v_n}{\partial \nu}=0$ on
$\partial{\mathcal{D}}\times [0,T)$.

We set $A=-\Delta + Id$, apply $A^{-1}$ to \eqref{eq2} and take
the $L^2$ inner product with $v_n(\cdot,t)$. The
$L^2$-orthogonality of the eigenfunctions $\epsilon_k$ of
$\Delta$ gives
\[ \sum_{k\in {\mathbb N}^d} [\lambda_k+1]^{-1}\rho_k(t) \partial_t \rho_k(t)
+ \sum_{k\in {\mathbb N}^d} \lambda_k \rho_k(t)^2 + \big(
Q(\cdot,t), v_n(\cdot,t)\big) =0.\] Integrating this identity from
$0$ to $t$ yields
\[ \int_0^t \big( Q(\cdot,s), v_n(\cdot,s)\big) ds =
 \sum_{k\in {\mathbb N}^d}\frac{1}{2}  [\lambda_k+1]^{-1} \big( \rho_k(0)^2 - \sum_{k\in {\mathbb N}^d}
\Big[  [\lambda_k+1]^{-1} \big( \rho_k(t)^2 + \int_0^t \lambda_k \rho_k(s)^2 ds \Big].
\]
Since $\lambda_k\geq 0$ for all $k$, we obtain
\begin{equation}\label{siki-3}
\int_0^t\big(Q(\cdot,s), v_n(\cdot,s))_{L^2}\;ds \leq
\frac{1}{2}\sum_{k\in {\mathbb
N}^d}[\lambda_k+1]^{-1}\rho_k^2(0)=B(u_0).
\end{equation} %
%
Furthermore, $f$ is a polynomial of degree 3; therefore,  $f(u_n)\geq
\frac{4}{5} u_n^4 -c$ for some non negative constant $c$. This yields  
\[
\int_0^t \!\! \chi_n(\| u_n(\cdot,s) \|_q)\|u_n(\cdot,s)\|_4^4 ds
\leq C \Big\{ 1+\int_0^t\!\! \int_{{\mathcal{D}}}\!\!
\chi_n(\|u_n(\cdot,s)\|_q)f(u_n(\cdot,s))u_n(\cdot,s) dxds\Big\}.
\]
Since $Q=\chi_n(\|v_n+\mathcal{L}(u_n)\|_q) f(u_n)$ and $u_n=\mathcal{L}(u_n)+v_n$, using \eqref{siki-3} in the
previous identity we obtain 
\begin{align}\label{siki-4}
\int_0^t \!\! \chi_n(\| u_n(\cdot,s) \|_q)\|u_n(\cdot,s)\|_4^4\,
ds
\leq  C\Big\{  & 1+\int_0^t\!\! \int_{{\mathcal{D}}} \chi_n(\|
u_n(\cdot,s) \|_q)
f(u_n(\cdot,s))\mathcal{L}(u_n(\cdot,s))\,dxds \nonumber \\
& +B(u_0)\Big\}.
\end{align}
Using once more the fact that  $f(u_n)$ is a third degree polynomial, Young's inequality implies that
for any $\epsilon >0$ and $s\in [0,T]$,
\begin{align*}  \int_{{\mathcal{D}}} &  \chi_n(\| u_n(\cdot,s) \|_q)
f(u_n(x,s))  \mathcal{L}(u_n(x,s))\,dx
 \leq \epsilon  \int_{{\mathcal{D}}}  \chi_n(\| u_n(\cdot,s) \|_q)
|f(u_n(.,s))|^{4/3} dx \\
& \qquad+ \frac{C}{\epsilon} \int_{{\mathcal{D}}}  \chi_n(\|
u_n(\cdot,s) \|_q)
| \mathcal{L}(u_n(x,s))|^4 \,dx\\
&\leq C \epsilon  \chi_n(\| u_n(\cdot,s) \|_q)
\|u_n(\cdot,s)\|_4^4 + C + \frac{C}{\epsilon} \chi_n(\|
u_n(\cdot,s) \|_q) \| \mathcal{L}(u_n(\cdot,s))\|_4^4.
\end{align*}
Consequently, plugging this upper estimate in \eqref{siki-4} and
choosing $\epsilon$ small enough, we complete the proof of 
 \eqref{bound-u_n-siki}.
\end{proof}
\subsection{Existence of a global solution}
The Sobolev embedding theorem implies that for $d=1,2,3$,
$H^2({\mathcal{D}}) \subset L^4({\mathcal{D}})$. Hence,
computations similar to that used to prove  \eqref{est-v_n^m}
with the weak $H^2({\mathcal{D}})$-limit $v_n$ of $v_n^m$ taken
instead of $ v_n^m$, show  that for any $\epsilon >0$ we have 
\begin{align*}  
 \|  v_n(\cdot, t)\|_{2}^2 &
+\int_0^t\|\Delta v_n(\cdot, s)\|_{2}^2\;ds
\leq \|u_0\|_{2}^2 + C \int_0^t \tilde{T}_1(s) ds + \epsilon \int_0^t\|\Delta v_n(\cdot,s)\|^2_2 ds \\
& + C (1+{\epsilon}^{-1}) \Big[ T+  \int_0^t
\chi_n(\|v_n(\cdot,s)+{\mathcal L}(u_n)(\cdot,s))\|_q)
\|v_n(\cdot,s)\|_4^4 ds \Big],
\end{align*}
where
the  Cauchy-Schwarz and Young inequalities yield for any $\epsilon >0$ 
\begin{align*}
\tilde{T}_1(s) \leq & \epsilon \big[ \|\Delta v_n(\cdot,s)\|^2_2 +
\| v_n(\cdot,s)\|_2^2\big] + \frac{C}{\epsilon}
\chi_n(\|v_n(\cdot,s)+\mathcal{L}(u_n)(\cdot,s))\|_q)
\bar{T}_1(s),
\end{align*}
for $\bar{T}_1(s)$  defined by
\begin{align*}
 \bar{T}_1(s) : = &\int_{\mathcal{D}} |{\mathcal L}(u_n(x,s))|^2
 \big[ 1+ |v_n(x,s)|^4 +  |{\mathcal L}(u_n(x,s))|^4\big] dx\\
\leq & C\big[ 1+ \|{\mathcal L}(u_n(\cdot,s))\|_{\infty}^6 +
\|{\mathcal L}(u_n(\cdot,s))\|_{\infty}^2\|v_n(\cdot,s)\|_4^4
\big].
\end{align*}
Recall that $u_n=v_n+{\mathcal L}(u_n)$. Choosing $\epsilon$ small enough, using the Gronwall Lemma and
Lemma \ref{bound_u_n}, we deduce that for $t\in [0,T]$ there exists a positive constant $C(T)$ such that 
\begin{align} \label{est-v_n-last}
 \|  v_n& (\cdot, t)\|_{2}^2 + \frac{1}{2} \int_0^t \|\Delta v_n(\cdot,s)\|_2^2 ds
\leq C(T) \Big( \|u_0\|_2^2  + 1 \nonumber \\
&\qquad  + \int_0^t\!\! \chi_n(\|u_n(\cdot,s)\|_q)
\big[ 1+ \|{\mathcal L}(u_n(\cdot,s))\|_6^6 +  \|{\mathcal L}(u_n)\|_{L^\infty}^2 \; \|v_n(\cdot,s)\|_4^4  \big] ds \Big)  \nonumber  \\
&\leq  C(T) \Big[ \|u_0\|_2^2 +1   + \|{\mathcal L}(u_n)\|_{L^\infty}^6\nonumber \\
&\qquad  +\big( 1+ \|{\mathcal L}(u_n)\|_{L^\infty}^2\big)
 \int_0^t \!\!  \chi_n(\|u_n(\cdot,s)\|_q)
 \|u_n(\cdot,s)\|_4^4   
\; ds   \Big] \nonumber  \\
&\leq C(T) \Big[ 1+  \|u_0\|_2^2 +    \|{\mathcal
L}(u_n)\|_{L^\infty}^6 + (1+\|  {\mathcal
L}(u_n)\|_{L^\infty}^2\big) B(u_0) \Big] .
\end{align}
holds for every $n\geq 1$. 

The following result is similar to (2.33)  in \cite{CW}, but the
proof is different. Note that a gap in this reference has been
fixed and that the proof is simpler using the Sobolev embedding
Theorem and \eqref{sup_L_n}; it does not rely any more on an
interpolation argument. As pointed out in \cite{AK}, where
stochastic existence for the Cahn-Hilliard equation has been
proven for bounded domains of general geometry, the important
property of the domain's boundary is being Lipschitz in dimensions
$1,2,3$. This fact together with the  above $H^2$-norm 
estimate \eqref{est-v_n-last}  allows us to use  easier
$L^\infty$-norm arguments.
\begin{lemma}
 Let  $\sigma$ be Lipschitz and  satisfy the sub linear growth condition  \eqref{undif} with $\alpha \in (0,1]$. 
Let  $u_0\in L^q({\mathcal D})$ where $q$ satisfies Condition   {\bf ($\tilde{\mbox{\bf C}_\alpha}$)},  
 and  let $u_n$ be the solution to the SPDE \eqref{u_n}. Then for any
 $\beta \in (1,\infty)$ and $a>0$ such that $a\beta \in [2,\infty)$,
 there exists a positive constant $C(T)$ such that for every $n\geq 1$ we have
\begin{align}\label{sup_u_n}
E\Big{(}\Big{[}\int_0^T\|u_n(\cdot, t)\|_q^a\; dt\Big{]}^\beta\Big{)}& \leq C(T) \big[ 1+ \|u_0\|_2^{a\beta} + n^{3a \alpha \beta}
+ B(u_0)^{a\beta/2} (1+ n^{a\alpha\beta} )\big]  \nonumber \\
&\leq C(T) \big[1 +  \|u_0\|_2^{a\beta} + B(u_0)^{3a\beta/2} +
n^{3a \alpha \beta} \big].
\end{align}
\end{lemma}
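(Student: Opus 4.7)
The plan is to decompose $u_n = v_n + \mathcal{L}(u_n)$ and combine the pathwise estimate \eqref{est-v_n-last} for $v_n$ with the moment bound \eqref{sup_L_n} for $\mathcal{L}(u_n)$. Writing
\[
A_n := 1 + \|u_0\|_2^2 + \|\mathcal{L}(u_n)\|_{L^\infty}^6 + \big(1+\|\mathcal{L}(u_n)\|_{L^\infty}^2\big) B(u_0),
\]
the strategy is first to establish the almost sure pathwise bound $\big(\int_0^T \|u_n(\cdot,t)\|_q^a\,dt\big)^\beta \leq C(T) A_n^{a\beta/2}$, then take expectations and invoke \eqref{sup_L_n}.

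For the pathwise step, I would start with $a=2$. By \eqref{est-v_n-last}, both $\sup_{t\leq T} \|v_n(\cdot,t)\|_2^2$ and $\int_0^T \|\Delta v_n(\cdot,s)\|_2^2\,ds$ are bounded by $C(T)\, A_n$. Since $d\leq 3$, the Sobolev embedding $H^2(\mathcal{D}) \hookrightarrow L^q(\mathcal{D})$ together with the norm equivalence $\|v\|_{H^2}^2 \simeq \|v\|_2^2 + \|\Delta v\|_2^2$ under the Neumann conditions yields $\int_0^T \|v_n(\cdot,t)\|_q^2\,dt \leq C(T)\, A_n$. The remaining contribution $\int_0^T \|\mathcal{L}(u_n)(\cdot,t)\|_q^2\,dt \leq C(T) \|\mathcal{L}(u_n)\|_{L^\infty}^2$ is absorbed because $\|\mathcal{L}(u_n)\|_{L^\infty}^2 \leq 1 + \|\mathcal{L}(u_n)\|_{L^\infty}^6 \leq A_n$, so $\int_0^T \|u_n(\cdot,t)\|_q^2\,dt \leq C(T)\, A_n$ almost surely. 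For $a\in (0,2]$ with $a\beta \geq 2$, Hölder's inequality in time gives
\[
\Big(\int_0^T \|u_n\|_q^a\,dt\Big)^\beta \leq T^{\beta(1-a/2)}\Big(\int_0^T \|u_n\|_q^2\,dt\Big)^{a\beta/2} \leq C(T)\, A_n^{a\beta/2}.
\]
For $a>2$, I would use the Gagliardo--Nirenberg inequality $\|v_n\|_q \leq C \|v_n\|_2^{1-\theta}\|\Delta v_n\|_2^{\theta}$ with $\theta = \tfrac{d(q-2)}{4q}$; integrating in time and using Hölder together with \eqref{est-v_n-last} yields the analogous pathwise bound $\int_0^T \|v_n\|_q^a\,dt \leq C(T)\, A_n^{a/2}$.

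For the moment step, using $(x_1+\cdots+x_k)^p \leq C \sum_i x_i^p$ with $p = a\beta/2 \geq 1$,
\[
A_n^{a\beta/2} \leq C\Big[1 + \|u_0\|_2^{a\beta} + \|\mathcal{L}(u_n)\|_{L^\infty}^{3a\beta} + B(u_0)^{a\beta/2}\big(1+\|\mathcal{L}(u_n)\|_{L^\infty}^{a\beta}\big)\Big].
\]
Applying \eqref{sup_L_n} with $p = \tfrac{3a\beta}{2}$ and $p = \tfrac{a\beta}{2}$ (both $\geq 1$ since $a\beta \geq 2$), one obtains $E\|\mathcal{L}(u_n)\|_{L^\infty}^{3a\beta} \leq C(T)\, n^{3a\alpha\beta}$ and $E\|\mathcal{L}(u_n)\|_{L^\infty}^{a\beta} \leq C(T)\, n^{a\alpha\beta}$, producing the first inequality in \eqref{sup_u_n}. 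The second inequality follows from the elementary Young-type bound $xy \leq 2 + x^3 + y^3$ (valid for $x,y\geq 0$) applied to $x = B(u_0)^{a\beta/2}$ and $y = n^{a\alpha\beta}$.

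The main obstacle is tuning the pathwise estimate so that the $n$-power generated by the moment step is precisely $n^{3a\alpha\beta}$. The factor $3$ is dictated by the sixth power $\|\mathcal{L}(u_n)\|_{L^\infty}^6$ in $A_n$ (itself sharp, originating from the Young absorption used to derive \eqref{est-v_n-last}), so raising $A_n$ to $a\beta/2$ produces exactly $\|\mathcal{L}(u_n)\|_{L^\infty}^{3a\beta}$. Any cruder pathwise bound, such as the deterministic estimate $\sup_t \|v_n\|_q \leq C(T)(\|u_0\|_q + n^3)$ coming from the mild formulation, would introduce a spurious $n^{3(a-2)\beta}$ factor incompatible with \eqref{sup_u_n}; the use of \eqref{est-v_n-last} via the Sobolev or Gagliardo--Nirenberg route is therefore essential.
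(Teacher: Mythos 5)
Your overall strategy is the one the paper uses: decompose $u_n=v_n+\mathcal{L}(u_n)$, control the deterministic part through the energy estimate \eqref{est-v_n-last} and the Sobolev embedding, control the stochastic part through \eqref{sup_L_n}, then raise the resulting pathwise bound to the power $a\beta/2$ and take expectations. Your treatment of the case $a\le 2$ and your final moment computation (including the Young-type step that converts $B(u_0)^{a\beta/2}n^{a\alpha\beta}$ into $B(u_0)^{3a\beta/2}+n^{3a\alpha\beta}$) are correct and consistent with the paper.

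The gap is in the branch $a>2$. With the Gagliardo--Nirenberg exponent $\theta=\frac{d(q-2)}{4q}$, after extracting $\sup_t\|v_n(\cdot,t)\|_2^{a(1-\theta)}$ you are left with $\int_0^T\|\Delta v_n(\cdot,t)\|_2^{a\theta}\,dt$, and the H\"older step reducing this to $\big(\int_0^T\|\Delta v_n\|_2^2\,dt\big)^{a\theta/2}$ is valid only when $a\theta\le 2$, i.e. $a\le \frac{8q}{d(q-2)}$ (for $d=3$, $q=6$ this means $a\le 4$). For larger $a$ the inequality goes the wrong way, and the $L^2(0,T;L^2)$ control of $\Delta v_n$ supplied by \eqref{est-v_n-last} gives no bound on $\int_0^T\|\Delta v_n\|_2^{a\theta}\,dt$: the available information $v_n\in L^\infty_tL^2_x\cap L^2_tH^2_x$ interpolates into $L^{2/\theta}_tL^q_x$ and no further. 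Since the lemma is stated for arbitrary $a>0$ with $a\beta\ge 2$, and is invoked afterwards with $a=3\gamma'$ for $\gamma'>\frac{2q}{q-d}$ (which can exceed $2/\theta$), this restriction is not cosmetic. For what it is worth, the paper's own proof passes over this point by writing $\sup_{t\le T}\|v_n(\cdot,t)\|_q^{a\beta}\le C\sup_{t\le T}\|v_n(\cdot,t)\|_\infty^{a\beta}\le C\sup_{t\le T}\|v_n(\cdot,t)\|_{H^2}^{a\beta}$ and then appealing to \eqref{est-v_n-last}, which however only controls the \emph{time integral} of $\|\Delta v_n\|_2^2$, not its supremum in time; so your more careful route exposes the delicate step rather than resolving it. To close the argument for all $a$ one needs a uniform-in-time bound of $\|v_n(\cdot,t)\|_q$ (or of $\|v_n(\cdot,t)\|_{H^2}$) by a constant multiple of $A_n^{1/2}$; the $L^2$-in-time information alone does not yield it.
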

\begin{proof}
For any integer $k\geq 1$, set
$\|u\|_{H^k({\mathcal{D}})}:=\big{(}\sum_{|a|\leq
k}\|D_x^au\|^2_{L^2({\mathcal{D}})}\Big{)}^{1/2}$. Using the
Sobolev inequality (see e.g. \cite{Brenner}, Theorem 1.4.6), we
deduce the existence of a positive constant  $C$ such that for
every $u\in H^k({\mathcal{D}})$, $ \|u\|_\infty \leq C
\|u\|_{H^k({\mathcal{D}})}$, provided that ${\mathcal{D}}$ is a
bounded domain with Lipschitz boundary and $k$ is an integer with
$k>d/2$. Therefore, if $\mathcal{D}$ is a unit cube of ${\mathbb
R}^d$, $d=1,2,3$, we have
\[ \|u\|_\infty \leq C \|u\|_{H^2({\mathcal{D}})}.\]
Hence, given any $a\in (0,+\infty)$ and $\beta \in [1,+\infty)$ with
$a\beta \geq 2$, since $u_n=v_n + {\mathcal L}(u_n)$,  and
${\mathcal{D}}$ is bounded and of Lipschitz boundary, then there
exists a positive constant $C$ depending on $T$,
$|{\mathcal{D}}|$, $a$ and $\beta$, such that for every integer
$n\geq 1$:
\begin{align*}
\Big( \int_0^T \|u_n(\cdot,t)\|_q^a dt \Big)^\beta & \leq C \Big[
\sup_{t\in [0,T]} \|v_n(\cdot,t)\|_q^{a\beta }
+ \sup_{t\in [0,T]} \|{\mathcal L}(u_n)(\cdot,t)\|_q^{a\beta } \Big]\\
&\leq C \Big[ \sup_{t\in [0,T]} \|v_n(\cdot,t)\|_\infty ^{a\beta }
+ \|{\mathcal L}(u_n)\|_{L^\infty}^{a\beta} \Big].
\end{align*}
Thus, the inequalities  \eqref{sup_L_n} and  \eqref{est-v_n-last} yield the existence of $C$ as above such that
\begin{align*}
E\Big[ \Big( \int_0^T \|u_n(.,t)\|_q^a dt \Big)^\beta  \Big]  \leq & C\big[ 1 + \|u(0)\|_2^{a\beta} + B(u_0)^{a\beta/2}
\big(1+ E\|{\mathcal L} (u_n)\|_{L^\infty}^{a\beta}\big)  + E\|{\mathcal L}(u_n)\|_{L^\infty}^{3a\beta}\big] \\
& + C(T) E\|  {\mathcal L}(u_n)\|_{L^\infty}^{a\beta}\\
\leq  & C\big[ 1 + \|u(0)\|_2^{a\beta} + B(u_0)^{a\beta/2} +  B(u_0)^{a\beta/2} n^{\alpha a\beta} + n^{3\alpha a\beta}\big] .
\end{align*}
This proves the first upper estimate in \eqref{sup_u_n}. The second one is a straightforward consequence of the Young inequality
applied with the conjugate exponents $3$ and $3/2$; this completes the proof.
\end{proof}

The above lemma provides an upper estimate of moments of the $q$-norm of ${\mathcal M}(u_n)$.
\begin{lemma}
Let $\sigma$ be Lipschitz
and satisfy the sub linearity condition \eqref{undif} with $\alpha \in (0,1]$, $u_0\in L^q({\mathcal D})$ where $q$ 
satisfies Condition   ($\tilde{\mbox{\bf C}_\alpha}$).  Let
 $u_n$ be the solution of the SPDE \eqref{u_n} and let $\beta \in [\frac{2q}{q-d}, \infty)$.   Then for ${\mathcal M}(u_n)$
 defined by \eqref{M}
 there exists a positive  constant  $C:=C(T)$  such that for and every $n\geq 1$
 the following estimate holds:
\begin{equation}\label{M_1-and-M_2}
E\Big(\sup_{0\leq t \leq T}\|
\mathcal{M}(u_n)(\cdot, t) \|_q^\beta\Big)  
\leq C\; \big[1+ \|u_0\|_2^{3\beta} + B(u_0)^{9\beta/4} + n^{9\alpha \beta} \big] .
\end{equation}
\end{lemma}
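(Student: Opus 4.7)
The plan is to start from the convolution estimate used to prove \eqref{m_1-1-5}, but keep $\|u_n(\cdot,s)\|_q$ explicit rather than replacing it brutally by $n+1$, and then to combine the resulting bound with the already-established moment estimate \eqref{sup_u_n} on $\int_0^T\|u_n(\cdot,t)\|_q^{a}\,dt$. First, Minkowski's inequality together with \eqref{G-est-1}, \eqref{G-est-2}, Young's convolution inequality with exponents $\rho=q/3$ and $r=q/(q-2)$ (so that $\frac1\rho+\frac1r=\frac1q+1$), and \eqref{expexp}, give for every $t\in[0,T]$ the pointwise bound
\[
\|\mathcal{M}(u_n)(\cdot,t)\|_q\le C\int_0^t(t-s)^{-\frac12-\frac{d}{2q}}\chi_n(\|u_n(\cdot,s)\|_q)\big(1+\|u_n(\cdot,s)\|_q^{3}\big)\,ds,
\]
where the estimate $\|f(u_n(\cdot,s))\|_{q/3}\le C(1+\|u_n(\cdot,s)\|_q^{3})$ is used since $f$ is a polynomial of degree three and $q\ge 3$. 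The exponent $-\frac12-\frac{d}{2q}$ exceeds $-1$ exactly when $q>d$, which is automatic under Condition $(\tilde{\mbox{\bf C}_\alpha})$ (in case (ii) one even has $q>6(1-\alpha)>16/3$ since $\alpha<1/9$).

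Since $\beta\in\big[\frac{2q}{q-d},\infty\big)$, the next step is to choose $p_2\in\big(\frac{2q}{q-d},\beta\big)$ with H\"older conjugate $p_1\in\big(1,\frac{2q}{q+d}\big)$, so that the deterministic factor $\big(\int_0^T s^{-p_1(\frac12+\frac{d}{2q})}\,ds\big)^{1/p_1}$ is some finite $C(T)$ and $\beta/p_2>1$. Applying H\"older's inequality in time to the pointwise bound above and taking the supremum over $t\in[0,T]$ (the $L^{p_2}$-time norm of the integrand is non-decreasing in $t$) yields
\[
\sup_{t\in[0,T]}\|\mathcal{M}(u_n)(\cdot,t)\|_q^\beta\le C(T)\Big(\int_0^T\chi_n(\|u_n(\cdot,s)\|_q)\big(1+\|u_n(\cdot,s)\|_q^{3p_2}\big)\,ds\Big)^{\beta/p_2}.
\]

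Taking expectations and invoking \eqref{sup_u_n} with $a:=3p_2$ and exponent $\beta/p_2$ (so that $a\cdot(\beta/p_2)=3\beta\ge 2$), one gets
\[
E\Big[\sup_{t\in[0,T]}\|\mathcal{M}(u_n)(\cdot,t)\|_q^\beta\Big]\le C(T)\big[1+\|u_0\|_2^{3\beta}+n^{9\alpha\beta}+B(u_0)^{3\beta/2}\big(1+n^{3\alpha\beta}\big)\big].
\]
The final cosmetic step is to split the mixed term $B(u_0)^{3\beta/2}\,n^{3\alpha\beta}$ by Young's inequality with conjugate exponents $3/2$ and $3$, which produces $\tfrac{2}{3}B(u_0)^{9\beta/4}+\tfrac{1}{3}n^{9\alpha\beta}$; combined with the trivial bound $B(u_0)^{3\beta/2}\le 1+B(u_0)^{9\beta/4}$, this yields exactly \eqref{M_1-and-M_2}. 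The only delicate point is the selection of the H\"older pair $(p_1,p_2)$ so as to keep the time singularity integrable \emph{while} preserving the constraint $\beta/p_2>1$ required by \eqref{sup_u_n}; this is precisely the role of the hypothesis $\beta\ge\frac{2q}{q-d}$ (strict inequality away from the endpoint, the boundary case being recovered by continuity).
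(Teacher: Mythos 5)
Your argument is correct and follows essentially the same route as the paper's: the same Minkowski/Young convolution bound giving $\|\mathcal{M}(u_n)(\cdot,t)\|_q\le C\int_0^t(t-s)^{-\frac12-\frac{d}{2q}}(1+\|u_n(\cdot,s)\|_q^3)\,ds$, the same H\"older-in-time step with a conjugate pair (your $(p_1,p_2)$ is the paper's $(\gamma,\gamma')$) chosen so the singularity is integrable while $\beta/p_2>1$, and the same appeal to \eqref{sup_u_n} followed by Young's inequality to reach $B(u_0)^{9\beta/4}+n^{9\alpha\beta}$. Your handling of the endpoint $\beta=\frac{2q}{q-d}$ (via Jensen from the open case) is in fact slightly more careful than the paper, whose proof is written only for $\beta>\frac{2q}{q-d}$.
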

\begin{proof}Computations similar to that proving    \eqref{m_1-1-5}   yield for
$\frac{1}{\rho} + \frac{1}{r} = \frac{1}{q}+1$
\[
\|\mathcal{M}(u_n)(\cdot,t)\|_q\\
\leq C\; \int_0^t (t-s)^{-\frac{d+2}{4}+\frac{d}{4r}}
\Big{\|}\chi_n(\|u_n(\cdot,s)\|_q) f(u_n(y,s))\Big{\|}_{\rho} \; ds.
\]
Since $f$ is a third degree polynomial,  choosing $\rho=\frac{q}{3}$ as before, we deduce
\[
\Big\|\chi_n(\|u_n(\cdot,s)\|_q)f(u_n(y,s))\Big\|_{\frac{q}{3}}
\leq C \Big(1+\|u_n(\cdot , s)\|_q^3\Big).
\]
Thus for $\rho = \frac{q}{3}$ and $r$ such that $\frac{2}{q}+\frac{1}{r}=1$, we obtain for any $t\in [0,T]$:
\[
\|\mathcal{M}(u_n)(\cdot,t)\|_q \leq C \int_0^t
(t-s)^{-\frac{d+2}{4}+\frac{d}{4r}} \Big{(}1+\| u_n(\cdot,s)
\|_{q}^3\Big{)} \; ds.
\]
Let $\gamma\in (1,\infty)$ be such that $(-\frac{d+2}{4}+\frac{d}{4r})\gamma>-1$,
and $\gamma'$ be the conjugate exponent. Then $\gamma' > \frac{2q}{q-d}$ and H\"older's inequality yields
the existence of a positive constant $C$ such that
\[
\|\mathcal{M}(u_n)(\cdot,t)\|_q \leq
C(T)\Big\{1+\Big(\int_0^t \|u_n(\cdot, s)\|_q^{3\gamma'}\;
ds\Big)^{\frac{1}{\gamma'}}\Big\}.
\]
The upper estimate  \eqref{sup_u_n} completes the proof. Indeed, for $\beta > \frac{2q}{q-d}$ we can choose
$\gamma$ and $\gamma'$ as above with $\frac{\beta}{\gamma'} >1$; this clearly yields   $3\beta \geq 2$. Then 
H\"older's inequality  yields \eqref{M_1-and-M_2} for $\gamma' \beta \in [1,\infty)$.
\end{proof}
\quad\\
 In the above arguments; we  only assumed that the exponent $\alpha$ appearing in the
growth condition \eqref{undif} was in  the interval $(0,1]$, including the case of a usual linear growth condition. However, to prove 
that  equation \eqref{intf} has a unique global solution, we
must suppose that $\sigma$ has a sub linear growth/ More precisely, we have to assume that
$\alpha\in(0,1/9)$.

For every integer $n\geq 1$ let us define the stopping time $T_n$
as follows
\begin{equation}\label{defTn}
T_n:= \inf \Big\{ t\geq 0: \; \|u_n(\cdot,t)\|_q\geq n \Big\}.
\end{equation}
Then for every integer $n\geq 1$, the process $u(\cdot,t)=u_n(\cdot,t)$   is a solution of \eqref{intf} on the
interval $[0,T_n\wedge T]$. Assuming that $\alpha \in (0, \frac{1}{9})$, we will show that
$\displaystyle{\lim_{n\rightarrow\infty}}T_n=\infty$ a.s., which will enable us to solve
 \eqref{intf} on $[0,T]$ a.s. for any fixed $T$.
\begin{theorem} \label{thexistence}
Suppose that $\sigma$ is globally Lipschitz and satisfies the
sub-linearity condition \eqref{undif}  with $\alpha \in
(0,\frac{1}{9})$. Let $u_0\in L^q({\mathcal{D}})$ where $q$ satisfies Condition     ($\tilde{\mbox{\bf C}_\alpha}$). 
Then for any $T>0$ there exists a unique solution $u$ to the SPDE \eqref{intf}  in the time interval
$[0,T]$ (or equivalently
if $T_n$ is defined by \eqref{defTn}, $T_n \to  \infty $ a.s.  as $n\to \infty$); this solution   belongs to
$L^\infty \big( [0,T]; L^q({\mathcal{D}})  \big)$ a.s.
Furthermore, given any  $\beta \in \big( \frac{2q}{q-d},\infty\big)$,  we have
$$E\Big(1_{\{T\leq T_n\}} \sup_{t\leq T} \| u(.,t)\|_q^\beta\Big) \leq C(1+ n^{9\alpha \beta}).$$
\end{theorem}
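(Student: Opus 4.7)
The plan is to combine the moment bounds already established in the previous two lemmas with Markov's inequality and Borel--Cantelli: the representation \eqref{u_nn} together with \eqref{evolution-initial-value}, \eqref{sup_L_n} and \eqref{M_1-and-M_2} controls $E(\sup_{t\leq T}\|u_n(\cdot,t)\|_q^\beta)$ by a constant multiple of $1+n^{9\alpha\beta}$, so $P(T_n\leq T)$ is of order $n^{-(1-9\alpha)\beta}$, which is summable whenever $\beta$ is large enough and $\alpha<\tfrac{1}{9}$.

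More precisely, I would first fix $\beta$ large enough to satisfy simultaneously $\beta>\tfrac{q}{\alpha}$ (so that $u_n\in\mathcal H_T$), $\beta\geq\tfrac{2q}{q-d}$ (so that \eqref{M_1-and-M_2} applies), and $(1-9\alpha)\beta>1$ (needed below). Since the $L^q(\mathcal D)$-norm is dominated by the $L^\infty$-norm on the bounded domain $\mathcal D$, \eqref{sup_L_n} gives $E\sup_{t\leq T}\|\mathcal L(u_n)(\cdot,t)\|_q^\beta\leq C(T)\,n^{\alpha\beta}$; combined with \eqref{evolution-initial-value}, \eqref{M_1-and-M_2} and the bounds $\|u_0\|_2\leq C\|u_0\|_q$ and $B(u_0)\leq\tfrac12\|u_0\|_q^2$ available on the unit cube, the decomposition \eqref{u_nn} then yields
\begin{equation*}
E\Big(\sup_{t\leq T}\|u_n(\cdot,t)\|_q^\beta\Big)\leq C(T,u_0)\,\bigl(1+n^{9\alpha\beta}\bigr).
\end{equation*}
On $\{T_n\leq T\}$, path continuity of $u_n$ in $L^q$ forces $\sup_{t\leq T}\|u_n(\cdot,t)\|_q\geq n$, so Markov's inequality gives $P(T_n\leq T)\leq C(T,u_0)\bigl(n^{-\beta}+n^{-(1-9\alpha)\beta}\bigr)$, which is summable in $n$; Borel--Cantelli then yields $T_n\to\infty$ almost surely, and the displayed moment bound also shows that the resulting process belongs to $L^\infty([0,T];L^q(\mathcal D))$ a.s.

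Consistency across $n$ is obtained by a pathwise uniqueness argument. For $m<n$, set $\tau_{m,n}:=T_m\wedge\inf\{t:\|u_n(\cdot,t)\|_q\geq m\}$: on $[0,\tau_{m,n}]$ both cut-off factors $\chi_m(\|u_m(\cdot,t)\|_q)$ and $\chi_n(\|u_n(\cdot,t)\|_q)$ equal $1$, so $u_m$ and $u_n$ both satisfy the non-truncated integral equation \eqref{intf}, and a Gronwall argument based on the local Lipschitz estimates underlying \eqref{claim_m_1} and \eqref{claim_L} forces $u_m=u_n$ on $[0,\tau_{m,n}]$; matching the two $L^q$-norms along the way identifies $\tau_{m,n}=T_m$. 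Defining $u(\cdot,t):=u_n(\cdot,t)$ for $t\leq T_n$ therefore produces a well-defined global solution of \eqref{intf} on $[0,T]$ a.s., and since $u=u_n$ on $\{T\leq T_n\}$, the moment bound above gives the stated estimate. The main obstacle I anticipate is precisely this pathwise uniqueness step, which is not covered by the previous lemmas: the two truncated SPDEs have different nonlinearities (since the cut-off levels differ), so one must localize by $\tau_{m,n}$ before any Lipschitz bound becomes applicable; once this is in place, the Borel--Cantelli step is automatic, and the restriction $\alpha<\tfrac{1}{9}$ enters only to make $1-9\alpha$ positive and thus the series $\sum_n n^{-(1-9\alpha)\beta}$ summable for large $\beta$.
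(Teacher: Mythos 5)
Your proposal is correct and follows essentially the same route as the paper: combine \eqref{u_nn} with \eqref{evolution-initial-value}, \eqref{sup_L_n} and \eqref{M_1-and-M_2} to get $E\big(\sup_{t\le T}\|u_n(\cdot,t)\|_q^{\beta}\big)\le C(1+n^{9\alpha\beta})$, apply Chebyshev to bound $P(T_n<T)$ by $Cn^{-(1-9\alpha)\beta}$, and conclude by Borel--Cantelli for $\beta$ large enough that $(1-9\alpha)\beta>1$. Your explicit localization via $\tau_{m,n}$ and the Gronwall identification $u_m=u_n$ on $[0,T_m]$ is a welcome elaboration of the gluing step, which the paper disposes of with a one-line appeal to the uniqueness of the solution of the truncated equation.
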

\begin{proof}
The sequence $T_n$ is clearly non decreasing.
Fix $T>0$; by the definition of $T_n$,  on  the set  $\{T_n<  T\}$ we have for any $\beta \in [1,\infty)$
\[
\sup_{t\in [0,T]}\|u_n(\cdot,t)\|_{q}^{2\beta}\geq n^{2\beta}.
\]
Thus,  the  Chebyshev inequality,
\eqref{evolution-initial-value}, \eqref{u_nn}, \eqref{sup_L_n}
and \eqref{M_1-and-M_2} yield the existence of a constant $C$
depending on $T$, $\|u_0\|_q$ and $B(u_0)$ such that for every
$n\geq 1$ the next inequality holds true
\begin{equation}
P(T_n <  T)
\leq n^{-2\beta}E\Big(\sup_{t\in [0,T]}\|u_n(\cdot,
t)\|_q^{2\beta}\Big) \leq C  n^{-2\beta(1-9\alpha)}.
\end{equation}
 Since $\beta$ can be chosen large enough to ensure that $2\beta(1-9\alpha) >1$,  the Borel-Cantelli
Lemma implies that $P(\limsup_n \{ T_n<T\})=0$, that is $\lim_n
T_n \geq T$ a.s.  Since $T$ is arbitrary, this yields   $T_n \to
\infty$ a.s. as $n\to \infty$. The  uniqueness of the solution to
\eqref{u_n} implies that a process $u$ can be uniquely  defined
setting $u(\cdot,t)= u_n(\cdot,t)$ on $[0,T_n]$. Since $T_n\to
\infty $ a.s., we conclude that for any fixed $T>0$, equation
\eqref{intf} has a unique solution and the upper estimate of
moments of the $q$-norm of the solution
 follows from \eqref{evolution-initial-value}, \eqref{u_nn}, \eqref{sup_L_n} and \eqref{M_1-and-M_2}.
\end{proof}

\section{Generalization}\label{gener} The stochastic existence proof for the
Cahn-Hilliard/Allen-Cahn equation with noise could easily be
modified to hold for domains with more general geometry, cf. in
\cite{AK} the proposed eigenvalue-formulae-free approach for the
stochastic Cahn-Hilliard equation.

Our global existence and uniqueness result proven in Theorem \ref{thexistence} is also valid for the more
general model
\begin{equation}\label{S-CH-ACn}
\left\{ \begin{array}{rll}
u_t & =&-\varrho \Delta\Big(\Delta u-f(u)\Big)+\tilde{q}\Big(\Delta
u-f(u)\Big)
+\sigma(u)\dot{W} \quad {\rm in}\quad {\mathcal{D}}\times [0,T),\\
u(x,0)&=&u_0(x)\quad {\rm in}\quad{\mathcal{D}},\\
\frac{\partial u}{\partial \nu}&=&\frac{\partial \Delta u}{\partial
\nu}\; =0\;  \quad {\rm on}\quad \partial{\mathcal{D}}\times [0,T),
\end{array}
\right.
\end{equation}
for some constants $\varrho>0$ and $\tilde{q}\geq 0$.
The proof is very similar with the  following  simple modifications:
\begin{enumerate}
\item
We have to replace the Green's  function $G$ defined by
\eqref{Green} by the following $\varrho,\tilde{q}$-dependent one
$$ G^{\varrho,\tilde{q}}(x,y,t):=\sum_{k\in {\mathbb N}^d}
e^{(-\varrho\lambda_k^2+\tilde{q}\lambda_k)
t}\epsilon_k(x)\epsilon_k(y).$$
All the estimates used on $G$ also
hold for $G^{\varrho,\tilde{q}}$, since the operator
$-\varrho\Delta^2+\tilde{q}\Delta$  is parabolic in the sense of
Petrovsks\u {\i\i}.
\item
The estimate \eqref{bound-u_n-siki} also holds for $\varrho>0$ and
$\tilde{q}>0$ when $B(u_0)$ is defined as
\[
B(u_0):=\frac{1}{2}\Big\| \sum_{k\in {\mathbb N}^d}
[\varrho\lambda_k+\tilde{q}]^{-\frac{1}{2}}
(u_0,\epsilon_k)_{L^2({\mathcal{D}})}\epsilon_k\Big\|_{2}^2.
\]
Since $\varrho >0$ and $\lambda_k\geq 0$, one may also invert
$\varrho\lambda_k+\tilde{q}$ if $\tilde{q}>0$ for any $k\in{\mathbb N}^d$.

If $\tilde{q}=0$ (for $\varrho=1$ we get the Cahn-Hilliard equation) then
\[
B(u_0):=\frac{1}{2}\Big\| \sum_{k\in {\mathbb N^*}^d}
[\varrho\lambda_k]^{-\frac{1}{2}}
(u_0,\epsilon_k)_{L^2({\mathcal{D}})}\epsilon_k\Big\|_{2}^2.
\]
and $\varrho\lambda_k$, for any $k\in {\mathbb N^*}^d$, is
invertible.
\end{enumerate}

While the stochastic Cahn-Hilliard equation is a special case for
our analysis (with $\rho =1$ and $\tilde{q}=0$), this is not true for the stochastic Allen-Cahn
equation. In our model the assumption that $\varrho>0$ is crucial;
indeed, since  the fourth order operator is still acting, the
operator $-\rho \Delta^2 + \tilde{q}\Delta$ is also parabolic in
the sense of Petrovsk\u{\i\i}. Thus, the higher order
differential operator is dominating and all the upper estimates
of the Green's function and their derivatives stated in Section
\ref{secGreen} remain valid.

\section{Path regularity}\label{path}
In this section, we investigate the path regularity for the
stochastic solution of \eqref{S-CH-AC} under certain regularity
assumptions for the initial condition $u_0$ and when the domain
${ \mathcal{D}}$ is a parallelepiped. Note that the path regularity results proved in this section
remain valid on a rectangular domain for the equation \eqref{S-CH-ACn}. 
More precisely, we prove that when the coefficient $\sigma$ has
an appropriate sub-linear growth, 
 the  paths of the solution to  equation
\eqref{S-CH-AC} have a.s. a H\"older regularity depending on that of the initial condition. 
The path regularity proven here is the same as that obtained for  
the stochastic Cahn-Hilliard equation obtained in \cite{CW}, where
the coefficient $\sigma$ was supposed to be bounded. We follow
the main lines of the proof presented in \cite{CW}; nevertheless
some modifications are needed. 
 Indeed, the factorization method is used both for the deterministic and stochastic integrals. 

In this section we suppose that the assumptions of Theorem \ref{thexistence}  are satisfied.
 Let us recall that the integral form of the solution $u$ given by
\eqref{intf} 
can be decomposed as follows:
\begin{equation}\label{decompose-u}
u(t,x)=G_t u_0(x) + {\mathcal I}(x,t) + {\mathcal L}(x,t),
\end{equation}
where $G_t u_0$ is defined by \eqref{Gt}, and
\begin{align}
{\mathcal I}(x,t) = &\int_0^t \int_{{\mathcal{D}}}[\Delta G(x,y,t-s) - G(x,y,t-s)]  f(u(y,s)) \;dyds, \nonumber \\
{\mathcal L}(x,t)=
&\int_0^t\int_{{\mathcal{D}}}G(x,y,t-s)\sigma(u(y,s))\;
W(dy, ds).  \label{defL}
\end{align}
Let us study the regularity  of each term in the  decomposition \eqref{decompose-u} of $u$.

The  series decomposition of $G$ given in \eqref{Green} is similar to that in \cite{CW};
 hence an argument similar to
the proof of Lemma 2.1 of \cite{CW} if $u_0$ is continuous, and
to the first part of Lemma 2.2 of \cite{CW} if $u_0$ is
$\delta$-H\"{o}lder continuous, yields the following regularity
result for $G_\cdot u_0(\cdot)$.
\begin{lemma}\label{lem-path}
If $u_0$ is continuous, then the function $G_tu_0$ is continuous.
If $u_0$ belongs to $C^\delta({\mathcal{D}})$ for $0<\delta<1,$
then the function $(x,t)\to G_tu_0(x)$ is $\delta$-H\"{o}lder continuous in
the space variable $x$ and $\frac{\delta}{4}$-H\"{o}lder continuous in the time variable $t$.
\end{lemma}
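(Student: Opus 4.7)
The plan is to work directly from $G_t u_0(x) = \int_{\mathcal D} u_0(y) G(x,y,t) \, dy$ and exploit the Green's function estimates from Lemma \ref{G-est-lemma}, together with the key identity $\int_{\mathcal D} G(x,y,t) \, dy = 1$ valid for all $(x,t)$. This identity follows immediately from the series \eqref{Green}, because $\int_{\mathcal D} \epsilon_k(y)\, dy = 0$ for every non-zero multi-index $k$ and $\lambda_0 = 0$, so only the constant mode survives the spatial integration.

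For continuity when $u_0 \in C(\mathcal D)$, the first step is to prove that $G_t u_0 \to u_0$ uniformly on $\mathcal D$ as $t \to 0^+$. Writing
$G_t u_0(x) - u_0(x) = \int [u_0(y) - u_0(x)] G(x,y,t)\, dy$
and splitting over $\{|y-x| \leq \eta\}$ and its complement, the first piece is bounded by the modulus of continuity of $u_0$ times $\int G(x,y,t)\,dy = 1$, and the second one vanishes with $t$ using \eqref{G-est-1}, the Gaussian-type integral \eqref{expexp} and the change of variable $z = (y-x)\, t^{-1/4}$. Joint continuity on $\mathcal D \times (0,T]$ then follows from the explicit series \eqref{Green}, the bound \eqref{G-est-1} and dominated convergence.

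For the space H\"older estimate under $u_0 \in C^\delta$, since $\int [G(x,y,t) - G(x',y,t)]\, dy = 0$, one rewrites
\[
G_t u_0(x) - G_t u_0(x') = \int_{\mathcal D} \big[ G(x,y,t) - G(x',y,t) \big]\, \big[ u_0(y) - u_0(x) \big] \, dy.
\]
The kernel difference is controlled by interpolating the pointwise bound \eqref{G-est-1} with the gradient bound \eqref{G-est-2} applied via the mean value theorem along $[x,x']$: splitting between $|y-x| \geq 2|x-x'|$ (where the gradient bound is used) and $|y-x| < 2|x-x'|$ (where $|G(x,y,t)| + |G(x',y,t)|$ is used), one obtains
\[
|G(x,y,t) - G(x',y,t)| \leq C\, |x-x'|^\delta\, t^{-(d+\delta)/4}\, \big[ e^{h(x-y,t,0)} + e^{h(x'-y,t,0)} \big],
\]
with $h$ as in \eqref{defh}. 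Combined with $|u_0(y) - u_0(x)| \leq [u_0]_\delta\, |y-x|^\delta$, the rescaling $z = (y-x)\, t^{-1/4}$ turns $|y-x|^\delta\, dy$ into $t^{(d+\delta)/4}|z|^\delta\, dz$, which exactly cancels the $t^{-(d+\delta)/4}$ and leaves a finite integral by \eqref{expexp}. This produces the $\delta$-H\"older bound uniform in $t$.

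For the time H\"older estimate the identity $\int [G(x,y,t) - G(x,y,t')]\, dy = 0$ gives an analogous representation, and one now interpolates \eqref{G-est-1} with the time derivative bound \eqref{G-est-3}, so that parabolic scaling turns a time increment $|t-t'|$ into $|t-t'|^{\delta/4}$ after the same change of variable. The main obstacle I foresee is precisely in this time estimate: one must choose the interpolation exponent and split $\{|y-x| \lesssim |t-t'|^{1/4}\}$ versus its complement in order to tame the singularity of $\partial_t G$ as $t \to 0$ while preserving integrability in $y$. This is what forces the exponent $\delta/4$ and reflects the parabolic homogeneity of the bi-Laplace operator $\mathcal T$, where the fourth-order smoothing makes one unit of time worth four units of space.
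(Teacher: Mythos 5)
Your strategy is sound and, unlike the paper (which gives no argument of its own and simply defers to Lemmas 2.1 and 2.2 of \cite{CW}), it is self-contained. The structural facts you isolate are exactly the right ones: the normalization $\int_{\mathcal D}G(x,y,t)\,dy=1$ (valid here because $\lambda_0=0$ and $\epsilon_0$ is constant, so only the zero mode survives integration in $y$), the approximate-identity argument for continuity, and the representation of increments against $u_0(y)-u_0(x)$ followed by interpolation of \eqref{G-est-1} with \eqref{G-est-2} (resp.\ \eqref{G-est-3}) and the parabolic rescaling $z=(y-x)t^{-1/4}$, which produces the exponents $\delta$ and $\delta/4$. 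One cosmetic point: since $G$ is not nonnegative (the semigroup of $-\Delta^2+\Delta$ does not preserve positivity), the near-diagonal piece of the continuity argument should be bounded by the modulus of continuity times $\int_{\mathcal D}|G(x,y,t)|\,dy\le C$, obtained from \eqref{G-est-1} and \eqref{expexp}, rather than times $\int G\,dy=1$; the conclusion is unchanged.

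There is, however, one genuine flaw to repair. The displayed pointwise bound
$|G(x,y,t)-G(x',y,t)|\le C|x-x'|^\delta\,t^{-(d+\delta)/4}\big[e^{h(x-y,t,0)}+e^{h(x'-y,t,0)}\big]$
is false on the near region $\{|y-x|<2|x-x'|\}$ whenever $|x-x'|\le t^{1/4}$: there your derivation reduces it to $t^{-d/4}\le C|x-x'|^\delta t^{-(d+\delta)/4}$, i.e.\ $t^{1/4}\le C^{1/\delta}|x-x'|$, which need not hold. The interpolation of the mean-value/gradient bound with \eqref{G-est-1} is only legitimate on the far region, where the segment $[x,x']$ stays at distance comparable to $|y-x|$ from $y$. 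The integrated estimate nevertheless survives with a small change: on the near region do not pass through a kernel bound at all, but use $|u_0(y)-u_0(x)|\le [u_0]_\delta\,(2|x-x'|)^\delta$ together with $\int_{\mathcal D}\big(|G(x,y,t)|+|G(x',y,t)|\big)\,dy\le C$, which yields the contribution $C[u_0]_\delta|x-x'|^\delta$ directly. The same caveat applies to the time increment: interpolate \eqref{G-est-1} and \eqref{G-est-3} with exponent $\delta/4$ only when $t-t'\le t'$, so that $\sup_{s\in[t',t]}s^{-(d+4)/4}\le (t')^{-(d+4)/4}$ is usable and the $y$-integral of $|y-x|^\delta e^{h}$ cancels the singular power exactly as in space; when $t-t'>t'$, conclude instead from the two-sided approximate-identity bound $|G_su_0(x)-u_0(x)|\le C[u_0]_\delta s^{\delta/4}$ applied at $s=t$ and $s=t'$. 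With these adjustments the proof is complete.
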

Let us now consider the drift term ${\mathcal I}(x,t)$ and use
the factorization method (see e.g. \cite{DaZa} or \cite{CW}).

We remark that, as proved in Theorem \ref{thexistence}, if $u_0$
is bounded, then $u$ belongs a.s. to $L^{\infty}( 0,T ;
L^q({\mathcal{D}}))$ for any $q<\infty$ large enough.

The definition of the  Green's function yields
\begin{equation}\label{GG1}
\Delta G(x,y,t)=\int_{{\mathcal{D}}}G(x,y,t-s)\Delta G(z,y,s)\;
dz,
\end{equation}
and
\begin{equation}\label{GG2}
G(x,y,t)=\int_{{\mathcal{D}}}G(x,y,t-s) G(z,y,s)\; dz.
\end{equation}
For some $a\in(0,1)$ define the operators  ${\mathcal F}$ and
${\mathcal H}$ on $L^\infty(0,T ;L^q({\mathcal{D}}))$ as follows:
\begin{align*}
\mathcal{F}(v)(t,x):&=\int_0^t\int_{{\mathcal{D}}}G(x,z,t-s)(t-s)^{-a}v(z,s)\;
dzds,\\
\mathcal{H}(v)(z,s):&=\int_0^s\int_{{\mathcal{D}}} \big[ \Delta
G(z,y,s-s') - G(z,y,s-s')\big]  (s-s')^{a-1}  f(v(y,s'))\; dyds'   .
\end{align*}
Therefore, using relations \eqref{GG1} and \eqref{GG2} we deduce that 
\[ {\mathcal I}(x,t)=c_a\mathcal{F}(\mathcal{H}(u))(x,t),\]
where $c_a:=\pi^{-1} \sin(\pi a)$ obviously depends only on $a$.

First we claim that,  for $q$ satisfying condition ($\tilde{\mbox{\bf C}_\alpha}$),  the operator $\mathcal{H}$
maps  $L^\infty(0,T ;L^q({\mathcal{D}}))$ into itself. Indeed,
the estimates on the Green's  function in Lemma \ref{G-est-lemma}
and arguments similar to those used in Section \ref{sectruncated}
to prove \eqref{m_1-1-5} with $\rho = \frac{q}{3}$ (based on the
Minkowski and Young inequalities) prove that if $v\in
L^\infty(0,T ;  L^q({\mathcal{D}}))$ then
\begin{align*}
\| {\mathcal H}(v)(\cdot, t)\|_{q} & \leq \int_0^t
(t-s)^{-1+a-\frac{1}{2}-\frac{d}{2q}} \left( 1+\|v(\cdot, s)\|_q^3
\right)\; ds.
\end{align*}
For the boundedness of the above integral we need that
$-1+a-\frac{1}{2}-\frac{d}{2q}> -1$. Since $q>d$, this inequality holds for some 
$a\in \big(\frac{1}{2}+\frac{d}{2q},1\big)$. 
Then, an argument similar to that used in
\cite{CW} proves that if $v\in L^\infty ([0,T] ;
L^q({\mathcal{D}}))$ then ${\mathcal J}(v)$ belongs to ${\mathcal
C}^{\lambda , \mu}({\mathcal{D}}\times [0,T])$ for any $\lambda<1$
and $\mu<\frac12$. Indeed, the upper estimates of the Green's
function from Lemma \ref{Green} are the same as that for the
Green's function of the Cahn-Hilliard equation which only involves
the fourth order derivatives.

Considering the stochastic integral ${\mathcal L}$ defined in
\eqref{defL}, we observe that the fact that $\sigma$ is not
bounded any more does not allow us to use the related argument
from the proof of Lemma 2.2 in \cite{CW} stated on page 797.
Instead, we also  use  the factorization method for the stochastic integral. 
 Recall that given any $n\geq 1$, for $T_n=\inf\{ t \geq 0 : \|u_n(.,t)\|_q \geq n\}$ we have $1_{\{ T\leq T_n\}}
u(.,t)=1_{\{ T\leq T_n\}}u_n(.,t)$, where $u_n$ is the solution to
\eqref{u_n}. The local property of stochastic integrals implies
that for any $n$ and $t\in [0,T]$:
\[  1_{\{ T\leq T_n\} } {\mathcal L}(x,t) =  1_{\{ T\leq T_n\} }
 \int_0^t \int_{\mathcal{D}} G(t-s,x,y) 1_{\{ s\leq T_n\} }\sigma(u_n(y,s)) W(dy,ds)  . \]
The process $u_n$ is adapted and by \eqref{sup_u_n} for $q$ large enough,
if Condition (C) holds true, $\gamma >0 $ and  $\beta \in (1,\infty)$ are such that $\beta \gamma \in [2,\infty)$,
we have
 $E\big| \int_0^T \|u_n(.,t)\|_q^\gamma dt|^\beta  \leq C(n,T)$.
 Fix   $a\in (0,1)$, let ${\mathcal K}(u_n)$
be defined as follows:
\[ {\mathcal K}(u_n)(x,t)=\int_0^t \int_{\mathcal{D}} G(x,y,t-s) (t-s)^{a-1} 1_{ \{ s\leq T_n\} } \sigma (u_n(y,s)) W(dy,ds).\]
We at first check that this stochastic integral makes sense for fixed $t\in [0,T]$ and $x\in {\mathcal D}$,
and that a.s. ${\mathcal K}(u_n)\in L^\infty(0,T ; L^q({\mathcal D}))$,
 so that $1_{\{T\leq T_n\}} {\mathcal L}(x,t) = 1_{\{T\leq T_n\}} c_a
{\mathcal F}({\mathcal K}(u_n))(x,t)$.

Indeed, for fixed $t\in [0,T]$, $x\in {\mathcal D}$ and $p\in [1,\infty)$, the Burkholder inequality yields 
\begin{align*}
E|{\mathcal K}(u_n)(x,t)|^{2p } \leq & 
E \Big| \int_0^t \int_{\mathcal D} G^2(x,y,t-s) (t-s)^{2(a-1)} 1_{\{ s\leq T_n\}} \sigma^2(u_n(s,y)) dy ds\Big|^p\\
\leq & C(n) \Big| \int_0^t
(t-s)^{-\frac{d}{2}+2(a-1)+\frac{d}{4}}ds\Big|^p
\end{align*}
Let  $a\in(\frac{1}{2}+ \frac{d}{8},1)$; then we have 
$-\frac{d}{2}+2(a-1)+\frac{d}{4}>-1$, which yields 
\begin{equation}\label{neq1}
E|{\mathcal K}(u_n)(x,t)|^{2p} <\infty,\;\;\forall p \in [1,\infty).
\end{equation}

Let us now prove moment upper estimates of increments of
${\mathcal K}(u_n)$; this together with \eqref{neq1} will imply
by Garsia's Lemma that
$$E\big( \|{\mathcal K}(u_n)\|_{L^\infty(\mathcal{D}\times  [0,T] } )^{2\rho} \big) <\infty.$$

Arguments similar to those used in the proof of \eqref{supx} prove
that for $\tilde{\lambda} \in (0,1)$, $\tilde{q}\in (2\alpha, q)$
and $n\geq 1$,  we have for $t\in [0,T]$, $x,\xi \in {\mathcal D}$:
\begin{align*}
E\big| &{\mathcal K}(u_n)(x,t) - {\mathcal K}(u_n)(\xi,t)\big|^{2p} \leq C_p |x-\xi|^{2\tilde{\lambda} p} \\
&\quad \times \Big| \int_0^t (t-s)^{-\frac{d+1}{2}\tilde{\lambda} - \frac{d}{2}(1-\tilde{\lambda}) + 2(a-1)}
 1_{\{s\leq T_n\}} \| \exp(h(.,t,s)\|_{\frac{\tilde{q}}{\tilde{q}-2\alpha}}
\big[ 1+\|u_n(.,s)\|_{\tilde{q}}^{2\alpha}\big] ds \Big|^p\\
&\leq C_p(n) |x-\xi|^{2\tilde{\lambda}p} \Big| \int_0^t (t-s)^{-\frac{d+\tilde{\lambda}}{2}
+ 2(a-1) + \frac{d(\tilde{q}-2\alpha)}{4\tilde{q}}}
ds\Big|^p\\
&\leq C_p(n,T) |x-\xi|^{2\tilde{\lambda}p}
\end{align*}
for some finite constant $C_p(n,T)$, provided that the time integrability constraint 
$-\frac{d+\tilde{\lambda}}{2} + 2(a-1) +
\frac{d(\tilde{q}-2\alpha)}{4\tilde{q}}>-1$ holds true.    Since  Condition
($\tilde{\mbox{\bf C}_\alpha}$) is satisfied with $\alpha \in (0,\frac{1}{9})$, we deduce that $q>\frac{2\alpha d}{4-d}$. 
Hence given $\bar{\lambda} \in \big( 0, 2-\frac{d}{2}\big)$ one can find 
$\tilde{q}\in \big( \frac{2\alpha d}{4-d}, q \big)$ and $a\in \big( \frac{1}{2}+\frac{d}{8},1\big)$  such that the time
integrability is fulfilled. 

Similarly, for $0\leq t'\leq t \leq T$, $x\in {\mathcal D}$ and $\tilde{\mu}\in \big(0,\frac{1}{2}-\frac{d}{8}\big)$, 
arguments similar to that proving \eqref{supt}
imply
\begin{align*}
E\big| &{\mathcal K}(u_n)(x,t) - {\mathcal K}(u_n)(x,t')\big|^{2p} \leq C_p |t-t'|^{2\tilde{\mu} p} \\
&\quad \times \Big| \int_0^t (t-s)^{-2 (\frac{d}{4}+1)\tilde{\mu} - \frac{d}{2}(1-\tilde{\mu}) + 2(a-1)}
 1_{\{s\leq T_n\}} \| \exp(h(.,t,s)\|_{\frac{\tilde{q}}{\tilde{q}-2\alpha}}
\big[ 1+\|u_n(.,s)\|_{\tilde{q}}^{2\alpha}\big] ds \Big|^p\\
&\leq C_p(n) |t-t'|^{2\tilde{\mu}p} \Big| \int_0^t (t-s)^{-\frac{d}{2} -2\tilde{\mu}  + 2(a-1) + \frac{d(\tilde{q}-2\alpha)}{4 \tilde{q}}}
ds\Big|^p\\
&\leq C_p(n,T) |t-t'|^{2\tilde{\mu}p}
\end{align*}
for some finite constant $C_p(n,T)$, provided that
$2a-2\tilde{\mu} > 1+\frac{d}{4} + \frac{\alpha d}{2 \tilde{q}}$.
Once more, since $\alpha \in \big( 0, \frac{1}{9} \big)$, we have $q>\frac{2\alpha d}{4-d}$ and 
given $\tilde{\mu}\in \big( 0, \frac{1}{2}-\frac{d}{8}\big)$, we can find 
$\tilde{q}\in \big( \frac{2\alpha d}{4-d}, q \big)$ such that this inequality holds true.

 Hence, given $\bar{\lambda} \in \big( 0, 2-\frac{d}{2} \big)$ and $\bar{\mu} \in \big( 0, \frac{1}{2}-\frac{d}{8}\big) $, 
for every $n\geq 1$ and $p\in [1,\infty)$,   we can find some positive constant $C_p(n,T)$ such that 
 \[ E|{\mathcal K}(u_n)(x,t)-{\mathcal K}(u_n)(\xi,t')|^{2p} \leq C_p(n,T) \big( |\xi-x|^{2\tilde{\lambda} p} + |t-t'|^{2\tilde{\mu}p}\big)\]
for $0\leq t'\leq t \leq T$ and $x,\xi\in {\mathcal D}$.

The Garsia-Rodemich-Rumsey lemma implies that  
$$E\big(  \|{\mathcal K}(u_n)\|_{L^\infty(\mathcal{D}\times [0,T])}^{2p }\big) <\infty , \;\;\;\forall p \geq 1, $$ 
and 
$$E\big(  \|{\mathcal  K}(u_n)\|_q^{2p }\big) \leq E\big(  \|{\mathcal
K}(u_n)\|_{L^\infty(\mathcal{D} \times [0,T] )}^{2p} \big) <\infty , \;\;\;\forall p \geq 1. $$
 This  gives one one hand the stated time and space H\"older regularity, and on the other
 hand  the previous 
space-time H\"older moments estimates of 
 $ {\mathcal K}(u_n)\in L^\infty(0,T, L^q({\mathcal D}))$ a.s.

Since ${\mathcal F}$
maps  $L^\infty(0,T ; L^q({\mathcal D}))$ into ${\mathcal C}^{\lambda,\mu}( {\mathcal D}\times [0,T])$ for
 $\lambda < \frac{1}{2}-\frac{d}{8}$ and
$\mu < 2- \frac{d}{2}$ and since
${\mathcal L}(x,t) =  c_a {\mathcal F}({\mathcal K}(u_n))(x,t)$ on the set $\{ T\leq T_n\}$, we deduce that
${\mathcal L}(u)\in {\mathcal C}^{\lambda,\mu}({\mathcal D}\times [0,T])$ a.s. on the set $\{ T\leq T_n\}$.

 Finally, Theorem \ref{thexistence} implies that
as $n\to \infty$ the sets
$\{T\leq T_n\} $ increase to $\Omega$; this proves that a.s. ${\mathcal L}(u)\in {\mathcal C}^{\lambda,\mu}
([0,T]\times {\mathcal D})$
for $\lambda < \frac{1}{2}-\frac{d}{8}$ and $\mu < 2-\frac{d}{2}$.

As a consequence (cf. \cite{CW}), we obtain the following regularity of the trajectories.
\begin{theorem}\label{pathreg}
  Let  $\sigma$ be Lipschitz and satisfy the sub linearity condition
 \eqref{undif} with  $\alpha \in (0, \frac{1}{9})$,  let   $q$ satisfy Condition ($\tilde{\mbox{\bf C}_\alpha}$), 
 and let $u_0\in L^q({\mathcal D})$.  Then

(i) If $u_0$ is continuous, then the solution of \eqref{intf} has
almost surely continuous trajectories.

(ii)  If $u_0$ is
$\beta$-H\"{o}lder continuous for $0<\beta<1$, then the
trajectories of the solution to  \eqref{intf} are almost surely
$\beta\wedge \big(2-\frac{d}{2}\big)$-continuous in space and
$\frac{\beta}{4}\wedge \big(\frac{1}{2}-\frac{d}{8}\big)  $-continuous in time.  
\end{theorem}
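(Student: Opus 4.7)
The plan is to combine the decomposition \eqref{decompose-u}, $u(x,t) = G_t u_0(x) + \mathcal{I}(x,t) + \mathcal{L}(x,t)$, with the three separate regularity statements established in the discussion preceding the theorem, and to read off the final Hölder exponents as the minimum of the three. First I would invoke Lemma \ref{lem-path}: if $u_0$ is continuous, so is $G_t u_0$; and if $u_0 \in \mathcal{C}^\beta(\mathcal{D})$ with $0 < \beta < 1$, then $G_t u_0$ is $\beta$-Hölder in $x$ and $(\beta/4)$-Hölder in $t$. This step uses the series representation \eqref{Green} of the Green's function, which is precisely where the parallelepiped assumption on $\mathcal{D}$ intervenes.

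Next I would recall the factorization $\mathcal{I} = c_a\, \mathcal{F} \circ \mathcal{H}(u)$. The argument given just above the theorem shows that for any $a \in \big(\tfrac{1}{2} + \tfrac{d}{2q}, 1\big)$, an interval which is nonempty since $q > d$ under Condition ($\tilde{\mbox{\bf C}_\alpha}$), the operator $\mathcal{H}$ sends $L^\infty(0,T; L^q(\mathcal{D}))$ into itself, while $\mathcal{F}$ lifts this space into $\mathcal{C}^{\lambda,\mu}(\mathcal{D} \times [0,T])$ for arbitrary $\lambda < 1$ in space and $\mu < 1/2$ in time. Theorem \ref{thexistence} supplies the required a.s. membership of $u$ in $L^\infty(0,T; L^q(\mathcal{D}))$, so $\mathcal{I}$ is a.s. jointly Hölder with space exponent arbitrarily close to $1$ and time exponent arbitrarily close to $1/2$; these dominate the analogous exponents produced by $\mathcal{L}$ and hence do not appear in the final minimum.

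For the stochastic term, the factorization argument carried out just before the theorem already yields, on each event $\{T \leq T_n\}$, the identity $\mathcal{L}(x,t) = c_a\, \mathcal{F}(\mathcal{K}(u_n))(x,t)$ together with the almost-sure bound $\mathcal{K}(u_n) \in L^\infty(0,T; L^q(\mathcal{D}))$. This delivers $\mathcal{L} \in \mathcal{C}^{\lambda,\mu}$ with space exponent $\lambda < 2 - d/2$ and time exponent $\mu < 1/2 - d/8$ on $\{T \leq T_n\}$. Theorem \ref{thexistence} ensures $T_n \to \infty$ a.s., so these bounds extend to a global pathwise statement. Assembling the three contributions by taking the slowest regularity in each variable yields exactly the exponents $\beta \wedge (2 - d/2)$ in space and $(\beta/4) \wedge (1/2 - d/8)$ in time, giving part (ii), while part (i) follows because continuity of $u_0$ transfers to $G_t u_0$ and the other two terms are a.s. Hölder continuous. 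The conceptually hard step, namely the factorization of the stochastic integral with unbounded diffusion coefficient $\sigma$, has already been carried out above via localization by the stopping times $T_n$ combined with the moment estimates \eqref{sup_u_n} and the Garsia--Rodemich--Rumsey lemma; what remains in the proof of Theorem \ref{pathreg} is only the bookkeeping of combining the three pieces.
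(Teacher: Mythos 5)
Your proposal is correct and follows essentially the same route as the paper: the theorem is obtained exactly by combining Lemma \ref{lem-path} for $G_t u_0$ with the factorization arguments for $\mathcal{I}=c_a\mathcal{F}(\mathcal{H}(u))$ and $\mathcal{L}=c_a\mathcal{F}(\mathcal{K}(u_n))$ on $\{T\le T_n\}$, letting $n\to\infty$, and taking the minimum of the exponents. Your bookkeeping (the drift term's exponents $\lambda<1$ in space and $\mu<\tfrac12$ in time never being the binding constraint, since $\beta<1$ and $\tfrac{\beta}{4}<\tfrac12$) matches the paper's conclusion.
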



\noindent {\bf Acknowledgments} Dimitra Antonopoulou and Georgia Karali are
supported by the ``ARISTEIA" Action of the ``Operational Program
Education and Lifelong Learning" co-funded by the European Social
Fund (ESF) and National Resources.


\end{document}